\begin{document}

\newcommand{\V}{{\mathcal V}}      
\renewcommand{\O}{{\mathcal O}}
\newcommand{\LL}{\mathcal L}
\newcommand{\Ext}{\hbox{\rm Ext}}
\newcommand{\Hom}{\hbox{Hom}}
\newcommand{\Proj}{\hbox{Proj}}
\newcommand{\GrMod}{\hbox{GrMod}}
\newcommand{\grmod}{\hbox{gr-mod}}
\newcommand{\tors}{\hbox{tors}}
\newcommand{\rank}{\hbox{rank}}
\newcommand{\End}{\hbox{{\rm End}}}
\newcommand{\Der}{\hbox{Der}}
\newcommand{\GKdim}{\hbox{GKdim}}
\newcommand{\pdim}{\hbox{pdim}}
\newcommand{\gldim}{\hbox{gldim}}
\newcommand{\im}{\hbox{im}}
\renewcommand{\ker}{\hbox{ker}}

\newcommand{\lonto}{{\protect \longrightarrow\!\!\!\!\!\!\!\!\longrightarrow}}

\newcommand{\g}{\frak g}
\newcommand{\h}{\frak h}
\newcommand{\fp}{\frak p}
\newcommand{\fa}{\frak a}
\newcommand{\fb}{\frak b}
\newcommand{\fc}{\frak c}
\newcommand{\m}{{\mu}}
\newcommand{\gl}{{\frak g}{\frak l}}
\newcommand{\ssl}{{\frak s}{\frak l}}

\newcommand{\ds}{\displaystyle}
\newcommand{\s}{\sigma}
\renewcommand{\l}{\lambda}
\renewcommand{\a}{\alpha}
\renewcommand{\b}{\beta}
\newcommand{\G}{\Gamma}
\newcommand{\z}{\zeta}
\newcommand{\e}{\epsilon}
\renewcommand{\d}{\delta}
\newcommand{\p}{\rho}
\renewcommand{\t}{\tau}

\newcommand{\C}{{\Bbb C}}
\newcommand{\N}{{\Bbb N}}
\newcommand{\Z}{{\Bbb Z}}
\newcommand{\ZZ}{{\Bbb Z}}
\newcommand{\Q}{{\Bbb Q}}
\renewcommand{\k}{\mathbbm{k}}

\newcommand{\K}{{\mathcal K}}

\newcommand{\rowxy}{(x\ y)}
\newcommand{\colxy}{ \left({\begin{array}{c} x \\ y \end{array}}\right)}
\newcommand{\scolxy}{\left({\begin{smallmatrix} x \\ y
\end{smallmatrix}}\right)}

\renewcommand{\P}{{\Bbb P}}

\newcommand{\la}{\langle}
\newcommand{\ra}{\rangle}
\newcommand{\tensor}{\otimes}

\newtheorem{thm}{Theorem}[section]
\newtheorem{lemma}[thm]{Lemma}
\newtheorem{cor}[thm]{Corollary}
\newtheorem{prop}[thm]{Proposition}

\theoremstyle{definition}
\newtheorem{defn}[thm]{Definition}
\newtheorem{notn}[thm]{Notation}
\newtheorem{ex}[thm]{Example}
\newtheorem{rmk}[thm]{Remark}
\newtheorem{rmks}[thm]{Remarks}
\newtheorem{note}[thm]{Note}
\newtheorem{example}[thm]{Example}
\newtheorem{problem}[thm]{Problem}
\newtheorem{ques}[thm]{Question}
\newtheorem{thingy}[thm]{}

\newcommand{\onto}{{\protect \rightarrow\!\!\!\!\!\rightarrow}}
\newcommand{\donto}{\put(0,-2){$|$}\put(-1.3,-12){$\downarrow$}{\put(-1.3,-14.5) 

{$\downarrow$}}}

\newcounter{letter}
\renewcommand{\theletter}{\rom{(}\alph{letter}\rom{)}}

\newenvironment{lcase}{\begin{list}{~~~~\theletter} {\usecounter{letter}
\setlength{\labelwidth4ex}{\leftmargin6ex}}}{\end{list}}

\newcounter{rnum}
\renewcommand{\thernum}{\rom{(}\roman{rnum}\rom{)}}

\newenvironment{lnum}{\begin{list}{~~~~\thernum}{\usecounter{rnum}
\setlength{\labelwidth4ex}{\leftmargin6ex}}}{\end{list}}



\title{Some non-Koszul algebras from rational homotopy theory}

\keywords{Koszul algebra, group cohomology, formal space}
\subjclass[2010]{Primary 16S37, 20F38; Secondary 16S30, 55P62}
\author[  Conner, Goetz ]{ }
\maketitle

\begin{center}

\vskip-.2in Andrew Conner \\
\bigskip

Department of Mathematics and Computer Science\\
Saint Mary's College of California\\
Moraga, CA 94575\\
\bigskip

 Pete Goetz \\
\bigskip

Department of Mathematics\\ Humboldt State University\\
Arcata, California  95521
\\ \ \\

\end{center}

\setcounter{page}{1}

\thispagestyle{empty}

\vspace{0.2in}

\begin{abstract}
The McCool group, denoted $P\Sigma_n$, is the group of pure symmetric automorphisms of a free group of rank $n$. The cohomology algebra $H^*(P\Sigma_n, \Q)$ was determined by Jensen, McCammond and Meier. We prove that $H^*(P\Sigma_n, \Q)$ is a non-Koszul algebra for $n \geq 4$, which answers a question of Cohen and Pruidze. We also study the enveloping algebra of the  graded Lie algebra associated to the lower central series of $P\Sigma_n$, and prove that it has two natural decompositions as a smash product of algebras.
\end{abstract}

\bigskip


\section{Introduction}

The main purpose of this article is to give a computer-aided proof of a surprising failure of the Koszul property for a family of algebras of interest in rational homotopy theory. 

The study of formal spaces provides a natural setting for the question of whether $H^*(X,\Q)$ is a Koszul algebra for a given space $X$. A space $X$ is called \emph{formal} if its Sullivan model $A_{PL}(X)$ of polynomial differential forms is a formal graded-commutative differential graded algebra (cdga)\ - that is, if $A_{PL}(X)$ is quasi-isomorphic as a cdga to $H^*(A_{PL}(X))$ with trivial differential. The $\Q$-completion of a formal space $X$, and hence $X$'s rational homotopy theory, is completely determined by $H^*(X,\Q)$.  There are many examples of such spaces in topology, including compact K\"{a}hler manifolds and complements of complex hyperplane arrangements. The latter implies Eilenberg-Mac Lane spaces of type $K(G,1)$ where $G$ is a pure braid group are formal spaces. 

Sullivan and Morgan \cite{Sullivan, Morgan} showed that the fundamental group of a formal space is a \emph{1-formal group}. There exist non-formal spaces with 1-formal fundamental groups, but Sullivan and Morgan's theorem does have a partial converse. Papadima and Suciu \cite{PapSuc} proved that for a connected, finite-type CW complex $X$, if $H^*(X,\Q)$ is a Koszul algebra and $\pi_1(X)$ is a 1-formal group, then $X$ is a formal space. 

In \cite{BerPap}, Berceanu and Papadima prove certain motion groups, denoted $P\Sigma_n$, are 1-formal groups. 
The groups $P\Sigma_n$ have a long history. David Dahm, in his unpublished PhD thesis \cite{Dahm}, considered arrangements of $n$ unknotted, unlinked circles in 3-space and the corresponding group $P\Sigma_n$ of motions of the arrangement in which each circle ends at its original position.  In \cite{Goldsmith}, Goldsmith explained Dahm's work and realized $P\Sigma_n$ as a subgroup of the automorphism group of the free group of rank $n$; she also determined specific generators of $P\Sigma_n$.


In \cite{McCool}, McCool found a presentation of $P\Sigma_n$ in terms of relations of the generators found by Goldsmith. In the paper \cite{BrownLee}, Brownstein and Lee were interested in the representation theory of $P\Sigma_n$ and successfully computed the second integral cohomology $H^2(P\Sigma_n, \Z)$; they also conjectured a presentation for the entire cohomology algebra $H^*(P\Sigma_n, \Z)$. Over ten years later, Jensen, McCammond and Meier in \cite{JMM} proved the conjecture of Brownstein and Lee. In particular, they showed $H^*(P\Sigma_n,\Q)$ is a quadratic algebra, a necessary condition for Koszulity. 

Berceanu and Papadima  \cite{BerPap} also established 1-formality for the related group $P\Sigma_n^+$ called the \emph{upper triangular McCool group}.
Cohen and Pruidze \cite{CohPru} proved $H^*(P\Sigma_n^+,\Q)$ is a Koszul algebra, hence the Eilenberg-Mac Lane space of type $K(P\Sigma^+_n, 1)$ is a formal space. They asked in that paper whether $H^*(P\Sigma_n,\Q)$ is Koszul.
Our main theorem is the following.

\begin{thm}\label{mainIntro} For $n \geq 4$, the algebra $H^*(P\Sigma_n, \Q)$ is not Koszul.
\end{thm}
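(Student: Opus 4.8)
The plan is to exploit the standard cohomological characterization of Koszulity: a connected graded $\Q$-algebra $A$ with $A_0 = \Q$, generated in degree $1$, is Koszul if and only if the trivial module $\Q$ admits a linear minimal free resolution, equivalently $\Ext_A^{i,j}(\Q,\Q) = 0$ whenever $i \neq j$ (here $i$ denotes homological and $j$ internal degree). Write $A = H^*(P\Sigma_n,\Q)$. By the theorem of Jensen, McCammond and Meier, $A$ is quadratic, so $\Ext_A^{1,j}$ vanishes for $j \neq 1$ and $\Ext_A^{2,j}$ vanishes for $j \neq 2$ automatically. Hence the first possible obstruction to Koszulity lives in homological degree $3$, and it suffices to exhibit a nonzero class in $\Ext_A^{3,j}(\Q,\Q)$ for some $j \geq 4$, i.e.\ a non-linear third syzygy. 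This is the certificate I would ultimately aim to produce.

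Before committing to a large computation, I would reduce the whole family to a single case. The strand-forgetting homomorphisms $P\Sigma_n \to P\Sigma_{n-1}$ split the standard inclusions $P\Sigma_{n-1}\hookrightarrow P\Sigma_n$, so composing them gives group retractions $P\Sigma_n \to P\Sigma_4$ for every $n \geq 4$. Applying cohomology yields graded-algebra maps $r^*\colon H^*(P\Sigma_4,\Q) \to H^*(P\Sigma_n,\Q)$ and $\iota^*\colon H^*(P\Sigma_n,\Q) \to H^*(P\Sigma_4,\Q)$ with $\iota^*\circ r^* = \mathrm{id}$, exhibiting $H^*(P\Sigma_4,\Q)$ as a graded-algebra retract of $H^*(P\Sigma_n,\Q)$. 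Since Koszulity is inherited by graded-algebra retracts, it is enough to prove that $H^*(P\Sigma_4,\Q)$ is \emph{not} Koszul; non-Koszulity then propagates to all $n \geq 4$ by contraposition.

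For the case $n = 4$ I would work from the explicit quadratic presentation of $A = H^*(P\Sigma_4,\Q)$ and attempt two complementary computations. First the numerical obstruction: if $A$ is Koszul then $H_A(t)\,H_{A^!}(-t) = 1$, where $A^!$ is the quadratic dual. For a $1$-formal group the dual $A^!$ is the universal enveloping algebra of the associated graded Lie algebra, so $H_{A^!}$ is accessible by a Poincar\'e--Birkhoff--Witt count once the graded pieces of that Lie algebra are understood (this is precisely where the smash-product decomposition studied later in the paper would feed in). I would compute $H_A$ from the known Poincar\'e polynomial of $P\Sigma_4$, compute $H_{A^!}$, and test the identity; if some coefficient of $1/H_A(-t)$ is negative, or the product differs from $1$, we are finished at once. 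Because the result is described as \emph{surprising}, I would expect this numerical test to be inconclusive, i.e.\ consistent with Koszulity. In that event the decisive step is genuinely homological: compute the first three or four terms of the minimal free resolution of $\Q$ over the noncommutative, graded-commutative algebra $A$ with a noncommutative Gr\"obner-basis engine, and read off a nonzero $\Ext_A^{3,j}(\Q,\Q)$ with $j \geq 4$.

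The main obstacle is carrying out this last computation correctly and certifiably. Since $A$ is a quotient of an exterior algebra, the Koszul sign conventions must be tracked faithfully through the Gr\"obner basis and the resolution, and one must verify that the computed resolution is genuinely minimal, so that the detected generator in internal degree $\geq 4$ is a true obstruction rather than an artifact of a non-minimal choice. Equivalently, I would confirm that the canonical bigraded map $A^! \to \Ext_A(\Q,\Q)$ fails to be an isomorphism by comparing $\dim_\Q \Ext_A^{3,3}(\Q,\Q)$ with $\dim_\Q (A^!)_3$. Making the retract reduction rigorous — in particular checking that the strand maps are honest group retractions inducing graded-algebra retractions — is a secondary but necessary point, after which a single finite computation at $n = 4$ settles the entire family.
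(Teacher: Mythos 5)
Your overall skeleton is the same as the paper's: reduce to $n=4$ via split retractions (your graded-algebra retract argument is exactly Lemma \ref{splitting lemma}, which the paper runs on the quadratic dual $U(\g_n)$ rather than on $H^*(P\Sigma_n,\Q)$ itself, an immaterial difference since $A$ is Koszul iff $A^!$ is), and then settle $n=4$ by a finite computation. Where you diverge from the paper is in the endgame, and here your prediction is exactly backwards: the numerical Hilbert-series test is \emph{not} inconclusive --- it is precisely what detects the failure. The paper's key extra ingredient, which you do not have, is the passage from $U(\g_4)$ to the factor algebra $U(\g_4/\h_4)$ by the free Lie ideal $\h_4$ generated by the column sums $X_j=\sum_i x_{ij}$; the Hochschild--Serre spectral sequence (Proposition \ref{ss}) shows this loses no Koszul-theoretic information. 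The payoff is twofold: $U(\g_4/\h_4)$ is small enough to handle, and its quadratic dual has Hilbert series $(1+4t)^2$, so it vanishes in degree $3$ and the Hilbert-series identity becomes \emph{equivalent} to Koszulity (Corollary 2.2.4 of \cite{PP}). A Gr\"obner basis computation then shows the degree-$8$ coefficient of the Hilbert series of $U(\g_4/\h_4)$ is $589834$ rather than $589824$, and that single discrepancy finishes the proof.

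The one point in your proposal that would actually trip you up in practice is the depth of the computation. The algebras in question are $7$-Koszul: the first non-linear syzygy sits in $\Ext^{3,8}$, not in internal degree $4$ or $5$. So your plan to ``compute the first three or four terms of the minimal free resolution and read off a nonzero $\Ext_A^{3,j}$ with $j\ge 4$'' is logically sound as a semi-decision procedure, but any computation truncated at low internal degree returns nothing, and you give no criterion for how far to push before concluding anything. Without the reduction to $U(\g_4/\h_4)$ (which makes the Hilbert-series test conclusive in both directions, so you know when you are done) or at least the foreknowledge that the obstruction lives in internal degree $8$, your computation is open-ended and, over $U(\g_4)$ or $H^*(P\Sigma_4,\Q)$ directly, substantially more expensive than the one the paper performs.
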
 

This raises the possibility that $K(P\Sigma_n, 1)$ is not a formal space, but we do not know if this is the case. Papadima and Yuzvinsky showed in \cite{Pap-Yuz} that for $X$, a formal, connected, topological space with finite Betti numbers, the Koszul property of $H^*(X, \Q)$ is equivalent to $X$ being a rational $K[\pi, 1]$ space, i.e., the $\Q$-completion of $X$ is aspheric. The proof of Theorem \ref{mainIntro} is given in Section 3.
One should contrast Theorem \ref{mainIntro} with the fact that several families of algebras naturally related to $H^*(P\Sigma_n, \Q)$ are Koszul. Besides $H^*(P\Sigma_n^+,\Q)$, the cohomology of the pure braid group is well-known to be Koszul, as are the algebras studied in \cite{BEER}.

In Section 2 we show that the quadratic dual of $H^*(P \Sigma_n, k)$ is the enveloping algebra of a graded Lie algebra. We denote this enveloping algebra by $U(\g_n)$, as such, $U(\g_n)$ has a natural bialgebra structure.  In Section 4 we study the algebra structure of $U(\g_n)$ and prove the following.

\begin{thm}\label{mainDecomposition} The algebra $U(\g_n)$ naturally decomposes as a smash product of algebras in two different ways.
\end{thm}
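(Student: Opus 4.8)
The plan is to obtain both decompositions from a single general principle: a semidirect-product decomposition of a graded Lie algebra induces a smash-product decomposition of its universal enveloping algebra. Precisely, suppose $\g_n = \fa \rtimes \fb$, meaning $\fa$ is a graded ideal, $\fb$ a graded subalgebra, and $\g_n = \fa \oplus \fb$ as graded vector spaces. Then the adjoint action of $\fb$ on $\fa$ extends to an action of the Hopf algebra $U(\fb)$ on $U(\fa)$ by derivations, making $U(\fa)$ a $U(\fb)$-module algebra, and the Poincar\'e--Birkhoff--Witt theorem identifies $U(\g_n)$ with the smash product $U(\fa)\# U(\fb)$ (with multiplication $(a\otimes h)(a'\otimes h') = \sum a\,(h_{(1)}\cdot a')\otimes h_{(2)}h'$); this uses exactly the bialgebra structure on $U(\g_n)$ noted in Section 2. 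Thus the whole theorem reduces to exhibiting two genuinely different semidirect-product decompositions of the Lie algebra $\g_n$, and the analytic content lies entirely at the Lie-algebra level.

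Both decompositions should come from the index structure of the degree-one generators of $\g_n$, which form a basis $\{x_{ij}: 1\le i\ne j\le n\}$ dual to the cohomology generators $\alpha_{ij}$ of Jensen--McCammond--Meier. First I would single out the conjugator index and assign to $\g_n$ an auxiliary $\N$-grading by declaring every generator whose conjugator index equals $n$ to have weight $1$ and all others weight $0$. The key observation is that the defining relations of $\g_n$ from Section 2 are homogeneous for this weight; since the weight is nonnegative, the weight-zero part $\fb$ is automatically a subalgebra and the positive-weight part $\fa$ an ideal, and $\g_n = \fa\oplus\fb$. This gives the first semidirect product, with $\fb$ isomorphic to a smaller Lie algebra of the same family and $\fa$ a free (or abelian) ideal to be identified. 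The second decomposition I would obtain from the remaining index slot, the conjugated letter, separating off the ideal $\fa'$ generated by those $x_{ij}$ with conjugated index $n$ and taking a complementary subalgebra $\fb'$.

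The step I expect to be the main obstacle is verifying, in each case, that the splitting is a genuine vector-space direct sum --- equivalently, that multiplication $U(\fa)\otimes U(\fb)\to U(\g_n)$ is a linear isomorphism --- rather than merely a sum of an ideal and a subalgebra. For the first decomposition this is transparent, because the weight is an honest grading: brackets can only preserve or raise weight, so no higher-degree collision between $\fa$ and $\fb$ is possible. The second decomposition is the delicate one: the defining relations of $\g_n$ are \emph{not} symmetric in the two index slots, so the analogous weight is not homogeneous, and indeed the naive subalgebra on the complementary generators already fails to be complementary in degree two. One must therefore show directly that the extension $0\to\fa'\to\g_n\to\g_n/\fa'\to 0$ splits as Lie algebras, choosing a corrected section, and then check the direct-sum property by hand. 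My preferred tool here is a Hilbert series comparison: the Hilbert series of $U(\g_n)$ is forced by that of its quadratic dual $H^*(P\Sigma_n,\Q)$, which is known from Jensen--McCammond--Meier, so after identifying the factors $\fa'$ and $\fb'$ one matches the product of their Hilbert series against that of $U(\g_n)$ to confirm that an explicit ordered PBW-type spanning set is a basis.

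Finally, once each splitting $\g_n=\fa\rtimes\fb$ is in hand, I would record the $U(\fb)$-module-algebra structure on $U(\fa)$ explicitly --- the adjoint action on generators, extended by the derivation property --- and invoke the principle of the first paragraph to conclude $U(\g_n)\cong U(\fa)\#U(\fb)$ in both cases. The two resulting smash products are genuinely distinct because their factors arise from opposite roles of the two index slots, which completes the proof.
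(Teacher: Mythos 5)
Your general principle in the first paragraph is sound and is also the paper's final step (a split exact sequence of graded Lie algebras $0\to\fa\to\fb\to\fc\to 0$ gives $U(\fb)\cong U(\fa)\# U(\fc)$), but the way you produce the two splittings of $\g_n$ diverges from the paper and, in the second case, does not actually work. The paper obtains both splittings at the level of the \emph{group}: it takes Bardakov's two split exact sequences $1\to K_n\to P\Sigma_n\to P\Sigma_{n-1}\to 1$ and $1\to G_n\to P\Sigma_n\to H_n\to 1$, checks (citing Cohen--Pakianathan and Bardakov's formulas) that the quotient acts trivially on $H_1$ of the kernel, and then invokes the Falk--Randell/Kohno theorem to split the lower central series quotients, using Berceanu--Papadima's identification of $\g_n$ with the rational LCS Lie algebra of $P\Sigma_n$. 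Your purely presentation-theoretic route misses this input, and it matters: the paper's first ideal is generated by \emph{all} $x_{in}$ and $x_{ni}$, and the relation $[x_{nj},x_{nk}+x_{jk}]$ is not homogeneous for the weight that puts these generators in degree $1$, so the ``honest grading'' argument you call transparent does not apply to that splitting. (Your conjugator-index grading is itself consistent and does yield \emph{a} semidirect product, but its degree-zero part has $(n-1)^2$ generators and is not a smaller algebra of the same family, so you are proving a different decomposition, not the one in Theorem 4.1.)

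The more serious gap is in your second decomposition. You concede that no weight works, that a ``corrected section'' must be found, and you propose to verify the resulting PBW-type basis by a Hilbert series comparison, on the grounds that ``the Hilbert series of $U(\g_n)$ is forced by that of its quadratic dual $H^*(P\Sigma_n,\Q)$.'' That inference is exactly the Koszul duality relation $h_A(t)h_{A^!}(-t)=1$, which holds only for Koszul algebras --- and the main theorem of this paper is that $U(\g_n)$ is \emph{not} Koszul for $n\ge 4$; indeed the computed Hilbert series of $U(\g_4/\h_4)$ visibly deviates from the Koszul prediction in degree $8$. So the one concrete tool you offer for the hard step is unavailable, and no splitting of $0\to\fa'\to\g_n\to\g_n/\fa'\to 0$ is ever exhibited. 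To repair the argument you would either need to import the group-theoretic machinery the paper uses, or construct the second Lie-algebra splitting and its PBW basis by hand without appealing to Koszul duality.
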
 

This theorem parallels results of \cite{Bardakov, CohPak, Pettet}.

\section{Definitions of $U(\g_n)$ and $U(\g_n)^!$}
\label{defs}

Let $F_n$ denote the free group on $\{x_1,\ldots,x_n\}$. For $n\ge 2$, the group $P\Sigma_n$ is the subgroup of $\text{Aut}(F_n)$ generated by the automorphisms
$$\alpha_{ij}(x_k) = \begin{cases} x_jx_ix_j^{-1} & \text{ if } k=i\\ x_k & \text{ if } k\neq i,\\ \end{cases}$$

for all $1 \leq i \ne j \leq n$.

Elements of $P\Sigma_n$ have been called \emph{pure symmetric automorphisms} and \emph{basis-conjugating automorphisms}. McCool proved that the following relations determine a presentation of $P\Sigma_n$ \cite{McCool}:
$$\begin{cases} 
[\alpha_{ij},\alpha_{ik}\alpha_{jk}] & i, j, k \text{ distinct}\\
[\alpha_{ij},\alpha_{kj}] & i, j, k \text{ distinct}\\
[\alpha_{ij},\alpha_{kl}] & i, j, k, l \text{ distinct.}\\
\end{cases}
$$

A presentation for the integral cohomology of $P\Sigma_n$ was conjectured by Brownstein and Lee in \cite{BrownLee} and proved by Jensen, McCammond, and Meier in \cite{JMM}. In this section and the next we work with coefficients in $\Q$, though all results hold over a field $\k$ of characteristic 0.

\begin{thm}[\cite{JMM}]
\label{hilbSeries}
Let $n\ge 2$ and $E$ be the exterior algebra over $\Q$ generated in degree 1 by elements $\alpha_{ij}$, $1\le i\neq j\le n$. Let $I\subset E$ be the homogeneous ideal generated by
$\alpha_{ij}\alpha_{ji}$ for all $i\neq j$ and $$\alpha_{kj}\alpha_{ji}-\alpha_{kj}\alpha_{ki}+\alpha_{ij}\alpha_{ki}$$ for distinct $i,j,k$. 
As graded algebras,  $H^*(P\Sigma_n,\Q)\cong E/I$. In particular, the Hilbert series of $H^*(P\Sigma_n,\Q)$ is $h(t)=(1+nt)^{n-1}$.
\end{thm}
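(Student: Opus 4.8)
The plan is to build a graded algebra surjection $\phi\colon E\to H^*(P\Sigma_n,\Q)$, show $I\subseteq\ker\phi$, and then force equality by a Hilbert-series comparison. I would send each generator $\alpha_{ij}\in E$ to the degree-one class dual to the image of McCool's automorphism $\alpha_{ij}$ in the abelianization of $P\Sigma_n$. Because every relator in McCool's presentation is a commutator, the abelianization is free abelian of rank $n(n-1)$, so $H^1(P\Sigma_n,\Q)$ has the classes $\alpha_{ij}$ as a basis; graded-commutativity gives $\alpha_{ij}^2=0$, so $\phi$ is defined on all of the exterior algebra $E$. To see that the generators of $I$ lie in $\ker\phi$, I would compute the relevant cup products directly from the presentation $2$-complex (equivalently, via Fox calculus), using that each McCool relator is a commutator so that its contribution to $H^2$ is governed by its quadratic leading part; the three relator types have leading parts $[\alpha_{ij},\alpha_{ik}]+[\alpha_{ij},\alpha_{jk}]$, $[\alpha_{ij},\alpha_{kj}]$ and $[\alpha_{ij},\alpha_{kl}]$, and one checks that $\alpha_{ij}\alpha_{ji}$ and $\alpha_{kj}\alpha_{ji}-\alpha_{kj}\alpha_{ki}+\alpha_{ij}\alpha_{ki}$ map to zero under the cup product. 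Thus $\phi$ factors through a surjection $\bar\phi\colon E/I\twoheadrightarrow H^*(P\Sigma_n,\Q)$ onto the subalgebra generated by $H^1$.

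It remains to prove $\bar\phi$ is injective, and I would set this up as a comparison of Hilbert series. On the algebraic side I would fix a monomial order on $E$ and verify that the quadratic generators of $I$ form a Gr\"obner basis: the relation $\alpha_{ij}\alpha_{ji}$ eliminates doubled edges, and the triangle relations are three-term straightening relations that rewrite every monomial in terms of ones whose underlying set of oriented edges is a rooted forest on the vertex set $\{1,\dots,n\}$. Counting such forests by number of edges then gives that $\dim(E/I)_k$ is the coefficient of $t^k$ in $(1+nt)^{n-1}$. The real task is an independent computation of $H^*(P\Sigma_n,\Q)$ that produces a basis indexed by these same rooted forests, with each basis element the image under $\bar\phi$ of the corresponding forest monomial; this shows simultaneously that $\bar\phi$ is injective and that $H^*$ is generated in degree one, so that $\bar\phi$ is an isomorphism and the Hilbert series is $(1+nt)^{n-1}$.

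The main obstacle is exactly this independent cohomology computation, which carries the content of the theorem. A tempting tool is the split fibration $K(P\Sigma_n,1)\to K(P\Sigma_{n-1},1)$ that forgets the last generator, whose fiber is a $K(\pi,1)$ for the (free) kernel; but the monodromy acts nontrivially on the homology of the fiber, so the Serre spectral sequence does not collapse as it would for an almost-direct product\ - indeed, such a collapse would make $H^*(P\Sigma_n,\Q)$ Koszul, contradicting this paper's main theorem. I would therefore compute the ranks from an explicit finite model for $K(P\Sigma_n,1)$, or from a small free $\Q P\Sigma_n$-resolution of $\Q$, whose chain groups are indexed by forests, and then identify the differentials and show the homology is free of the predicted rank in each degree. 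Proving that no relations beyond those in $I$ survive into $H^*(P\Sigma_n,\Q)$\ - equivalently, that this complex has the claimed homology\ - is where the difficulty lies.
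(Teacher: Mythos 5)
This statement is quoted by the paper from Jensen--McCammond--Meier \cite{JMM}; the paper supplies no proof of its own, so there is nothing internal to compare your argument against. Judged on its own terms, your proposal sets up the standard and correct framework: the surjection $E\to H^*(P\Sigma_n,\Q)$ exists because all of McCool's relators are commutators (so $H^1$ is as expected), the containment $I\subseteq\ker\phi$ is indeed checkable on the presentation $2$-complex via the quadratic parts of the relators, and the count of standard monomials of $E/I$ by rooted forests on $n$ vertices (giving $\binom{n-1}{k}n^k$ monomials in degree $k$, the coefficient of $t^k$ in $(1+nt)^{n-1}$) is the right combinatorics. Your observation that the split extensions $P\Sigma_n\to P\Sigma_{n-1}$ are not almost-direct products\ - since otherwise the cohomology would be Koszul, contradicting Theorem \ref{mainIntro}\ - is also correct and explains why the easy inductive route is closed.

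However, the proposal has a genuine gap, which you yourself flag: everything reduces to an independent computation showing that $\dim H^k(P\Sigma_n,\Q)$ equals the forest count and that $H^*$ is generated in degree one, and no argument is actually given for this. That step is not a technicality; it is the entire content of the theorem and of the JMM paper. The resolution ``indexed by forests'' that you hope to write down is not known to exist in any elementary form; what JMM actually do is analyze the action of the symmetric automorphism group on the contractible McCullough--Miller complex, whose simplex stabilizers are controlled by the Whitehead poset of labeled hypertrees, and show that the associated equivariant spectral sequence degenerates, yielding the Betti numbers and, with further work, the cup-product structure conjectured by Brownstein and Lee. Without that (or some substitute of comparable depth), your argument only establishes a surjection $E/I\twoheadrightarrow\langle H^1\rangle\subseteq H^*(P\Sigma_n,\Q)$ together with the upper bound $\dim(E/I)_k\le\binom{n-1}{k}n^k$, which proves neither the isomorphism nor the Hilbert series.
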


The cohomology algebra $H^*(P\Sigma_n,\Q)$ is a quadratic algebra, so it is natural to consider its quadratic dual algebra. We briefly recall this standard construction. The book \cite{PP} is an excellent reference on the theory of quadratic algebras.

Let $\k$ be a field. A $\k$-algebra $A$ is called \emph{quadratic} if there exists a finite dimensional $\k$-vector space $V$ and a subspace $R\subset V\tensor V$ such that $A\cong T(V)/\la R\ra$, where $T(V)$ denotes the tensor algebra on $V$. The tensor algebra $T(V)$ is graded by tensor degree, and if $R\subset V\tensor V$, the factor algebra $T(V)/\la R\ra$ inherits the tensor grading. Quadratic algebras are therefore $\N$-graded, have finite dimensional graded components, and are \emph{connected}, that is, $A_0=\k$. The \emph{quadratic dual algebra} $A^!$ is defined to be $T(V^*)/\la R^{\perp}\ra$ where $V^*$ is the $\k$-linear dual of $V$ and $R^{\perp}\subset V^*\tensor V^*$ is the orthogonal complement of $R$ with respect to the natural pairing. 

Throughout the paper, if $A$ is a quadratic algebra, we use the term \emph{$A$-module} to mean a $\Z$-graded $A$-module $M$ such that $\dim_{\k} M_i<\infty$ for all $i\in\Z$. If $M$ is an $A$-module and $j\in\Z$, $M[j]$ denotes the shifted module with $M[j]_i = M_{i+j}$ for all $i\in\Z$.
A homomorphism of $A$-modules $\varphi:M\rightarrow N$ is assumed to preserve degrees: $\varphi(M_i)\subset N_i$ for all $i\in\Z$.
The graded Hom functor is $$\underline{\Hom}_A(M,N):=\bigoplus_{j\in\Z} \Hom_A^j(M,N),$$ where $\Hom_A^j(M,N)=\Hom_A(M,N[-j])$. The right derived functors of the graded Hom functor are the graded Ext functors $$\Ext^i_A(M,N)=\bigoplus_{j\in\Z} \Ext_A^{i,j}(M,N).$$ The \emph{Yoneda algebra} of a quadratic algebra $A$ is the $\N\times\N$-graded $\k$-vector space $\Ext_A(\k,\k)=\bigoplus_{i,j} \Ext_A^{i,j}(\k,\k)$ endowed with the Yoneda composition product, which preserves the bi-grading. A quadratic algebra $A$ is called \emph{Koszul} if 
$\Ext_A^{i,j}(\k,\k)=0$ whenever $i\neq j$, in which case $\Ext_A(\k,\k)\cong A^!$ as graded algebras. An algebra $A$ is Koszul if and only if $A^!$ is (see \cite{PP}). 

In this section, and throughout most of this paper, we study the algebra $(E/I)^!$. 

\begin{lemma}
\label{presentation}
Let $n$, $E$, and $I$ be as in Theorem \ref{hilbSeries}.  
The quadratic algebra $(E/I)^!$ is isomorphic to the quotient of the free algebra $\Q\la x_{ij}\ |\ 1\le i\neq j\le n\ra$ by the homogeneous ideal generated by
\begin{align*}
[x_{ij},x_{ik}+x_{jk}]\qquad  & i, j, k\text{ distinct }\\ 
[x_{ik},x_{jk}]\qquad & i, j, k\text{ distinct }\\   
[x_{ij},x_{kl}]\qquad  & i, j, k, l\text{ distinct }
\end{align*}
where $[a,b]=ab-ba$.
\end{lemma}

\begin{proof}
If $\alpha_{ij}^*$ denotes the graded dual of the generator $\alpha_{ij}$, the mapping $x_{ij}\mapsto \alpha_{ij}^*$ on the degree one generators is clearly surjective. There are $n^2(n-1)(n-2)/2$ linearly independent relations listed, and the image of each vanishes on all relations of $E/I$ (including the relations of the exterior algebra). The Hilbert series of Theorem \ref{hilbSeries} shows $\dim_{\k} (E/I)_2 = n^2(n-1)(n-2)/2$, from which the result follows.
\end{proof}

The similarity to the relations of $P\Sigma_n$ is not a coincidence. Lemma \ref{presentation} shows that $(E/I)^!$ is the universal enveloping algebra of a quadratic Lie algebra which we denote by $\g_n$. The Lie algebra $\g_n$ is the associated graded Lie algebra of the Mal'cev Lie algebra of $P\Sigma_n$ (see \cite{BerPap}). For more on the construction of $\g_n$, see Section 4. Henceforth, we denote the quadratic dual algebra by $U(\g_n)$ rather than $(E/I)^!$. 

As a consequence of Lemma \ref{presentation}, it is easy to see that $U(\g_2)$ and $U(\g_3)$ are Koszul algebras; in fact, after a suitable change of variables, they are PBW-algebras in the sense of Priddy \cite{Priddy}. 

\begin{prop}
\label{23case}
For $n=2$ and $n=3$, the algebras $E/I$ and $U(\g_n)$ are isomorphic to PBW-algebras. In particular, they are Koszul algebras.
\end{prop}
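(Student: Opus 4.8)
The plan is to exhibit, for $n\in\{2,3\}$, an explicit linear change of variables on the degree-one generators and a degree-lexicographic order with respect to which the defining relations form a quadratic Gr\"obner basis; by Priddy's theorem \cite{Priddy} such an algebra is PBW, hence Koszul. Two general facts streamline the work. First, a quadratic algebra is PBW if and only if its quadratic dual is \cite{PP}, so it suffices to treat one of $E/I$ and $U(\g_n)$ and transport the conclusion to the other. Second, for a quadratic algebra confluence of the rewriting system need only be checked on degree-three overlaps: if every cubic word containing two overlapping leading monomials reduces to a single normal form, the relations are automatically a Gr\"obner basis in all degrees \cite{PP}. The case $n=2$ is then immediate: by Lemma \ref{presentation} there are no relations, so $U(\g_2)=\Q\la x_{12},x_{21}\ra$ is free and trivially PBW, while its dual $E/I$ has vanishing quadratic part and PBW basis $\{1,\alpha_{12},\alpha_{21}\}$.

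For $n=3$ I would work with $U(\g_3)$, whose nine relations (Lemma \ref{presentation}) are the three commuting relations $[x_{13},x_{23}]$, $[x_{21},x_{31}]$, $[x_{12},x_{32}]$ together with the six mixed relations $[x_{ij},x_{ik}+x_{jk}]$. The guiding observation is that the three column sums $P=x_{13}+x_{23}$, $Q=x_{21}+x_{31}$, $S=x_{12}+x_{32}$ are each forced by the relations to commute with four of the six generators; replacing $x_{23},x_{31},x_{32}$ by $P,Q,S$ turns six of the nine relations into genuine commuting relations (for instance $[x_{12},P]=[x_{13},P]=[x_{21},P]=0$) and renders the remaining three triangular, so that the quadratic part becomes amenable to a term order. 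First I would fix the new ordered basis, choose the degree-lexicographic order, and record the leading monomial of each relation after reduction to echelon form; since $\dim_\Q(E/I)_2=9$ forces $\dim_\Q R=27$, exactly $27$ of the $36$ quadratic words are leading terms and $9$ are normal. Dually, for $E/I$ the target is equally concrete: the nine normal quadratic monomials, viewed as directed edges on the six (new) generators, must contain no directed path of length two, and an elementary count shows that a loopless digraph on six vertices with nine edges and no such path is forced to be the complete bipartite ``sources-to-sinks'' pattern for a $3$--$3$ partition of the vertices. Producing a change of variables and order realizing exactly this pattern is the substance of the argument; the presence of the relations $\alpha_{ij}\alpha_{ji}=0$, which constrain the normal pattern in the original generators, is precisely what necessitates the change of variables.

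Having fixed the order, the remaining step is to verify confluence on the finitely many cubic overlaps, equivalently (on the $E/I$ side) to check that every cubic word is divisible by a leading quadratic, so that there are no normal monomials in degree three. Since $E/I$ vanishes in degrees $\ge 3$, this reproduces its Hilbert series $(1+3t)^2$ (Theorem \ref{hilbSeries}) and completes the PBW verification; Priddy's theorem then yields Koszulity, and quadratic duality transports both conclusions between $E/I$ and $U(\g_3)$. The main obstacle I anticipate is the combinatorial search in the middle step: choosing the change of variables and the generator order so that the induced leading terms realize exactly the admissible normal-monomial pattern and so that all cubic ambiguities resolve simultaneously. Once a correct order is located the verification is a finite, if tedious, check; the real difficulty is entirely in finding an order for which no new obstruction appears in degree three.
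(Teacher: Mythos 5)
Your proposal is correct and follows essentially the same route as the paper: the change of variables to the column sums $x_{13}+x_{23}$, $x_{21}+x_{31}$, $x_{12}+x_{32}$ is exactly the substitution used there, followed by a deg-lex order making the relations a quadratic Gr\"obner basis, with the $n=2$ case trivial and quadratic duality transporting PBW between $E/I$ and $U(\g_n)$. The only simplification you miss is that with the paper's order ($x_{12}>x_{13}>x_{21}>X_1>X_2>X_3$) every leading term has the form (old generator)(new generator), so there are no overlap ambiguities at all and the cubic confluence check you anticipate is vacuous.
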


\begin{proof}
The quadratic dual of a PBW-algebra is also a PBW-algebra, so it suffices to show $U(\g_2)$ and $U(\g_3)$ are isomorphic to PBW algebras.
The algebra $U(\g_2)$ is a free algebra on two generators, which is (trivially) a PBW-algebra. After making the linear change of variables $X_1=x_{21}+x_{31}$, $X_{2}=x_{12}+x_{32}$, and $X_3=x_{13}+x_{23}$, the defining relations of $U(\g_3)$ become
\begin{align*}
&[x_{12},X_3], [x_{13},X_2], [x_{21},X_3],\\
&[X_3-x_{13},X_1], [X_1-x_{21},X_2], [X_2-x_{12},X_1],\\
&[x_{21},X_1], [x_{12},X_2], [x_{13},X_3]
\end{align*}
Ordering monomials in the free algebra in deg-lex order where $x_{12}>x_{13}>x_{21}>X_1>X_2>X_3$, there are no ambiguities among the high terms of the defining relations. Therefore this alternate presentation of $U(\g_3)$ has a quadratic Gr\"{o}bner basis. It follows that $U(\g_3)$ is isomorphic to a PBW algebra, hence it is Koszul. 
\end{proof}

The analogous change-of-variables for $U(\g_4)$ does not produce a set of PBW generators, but many Koszul algebras are not PBW. 
The fact that the commutators $[x_{ij},x_{kl}]$ do not appear as relations of $U(\g_n)$ until $n=4$ underlies the failure of the PBW property. And as we will show, $U(\g_n)$ is not Koszul for $n=4$.

\section{The factor algebras $U(\g_n/\h_n)$}
\label{Ugh}


In this section, we describe a useful reduction from $U(\g_n)$ to a factor algebra whose failure of Koszulity implies $U(\g_n)$ is also not Koszul. We use the reduction to determine that $U(\g_4)$ is not Koszul. The next lemma shows this is sufficient to conclude $U(\g_n)$ is not Koszul for $n\ge 4$.

\begin{lemma}
\label{splitting lemma}
For $n\ge 4$, $U(\g_{n})$ is a split quotient of $U(\g_{n+1})$. Thus $U(\g_n)$ is Koszul if $U(\g_{n+1})$ is Koszul.
\end{lemma}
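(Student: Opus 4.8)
The plan is to exhibit explicit algebra homomorphisms $U(\g_{n+1}) \to U(\g_n)$ and $U(\g_n) \to U(\g_{n+1})$ whose composite (in the appropriate order) is the identity on $U(\g_n)$, so that $U(\g_n)$ is a retract of $U(\g_{n+1})$. Since both algebras are given by generators and relations via Lemma \ref{presentation}, I would define these maps on generators and check they respect the defining relations. The natural inclusion map $\iota\colon U(\g_n)\to U(\g_{n+1})$ should send each generator $x_{ij}$ (with $1\le i\neq j\le n$) to the generator $x_{ij}$ in $U(\g_{n+1})$; the relations of $U(\g_n)$ are a subset of those of $U(\g_{n+1})$ (they are the relations indexed entirely by $\{1,\dots,n\}$), so $\iota$ is well-defined. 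The crux is the retraction $\pi\colon U(\g_{n+1})\to U(\g_n)$: it should fix $x_{ij}$ for $1\le i\neq j\le n$ and kill all generators $x_{ij}$ for which $i=n+1$ or $j=n+1$, that is, send them to $0$.

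The key step is then to verify that $\pi$ is a well-defined algebra homomorphism, i.e.\ that setting the ``new'' generators to zero carries every defining relation of $U(\g_{n+1})$ to a valid relation of $U(\g_n)$ (or to zero). I would organize the check by relation type. A relation of $U(\g_{n+1})$ involving only indices in $\{1,\dots,n\}$ maps to the corresponding relation of $U(\g_n)$. A relation involving the index $n+1$ must map to zero under $\pi$. For the commutators $[x_{ij},x_{kl}]$ and $[x_{ik},x_{jk}]$ this is immediate, since any such relation that mentions $n+1$ has at least one factor annihilated by $\pi$. The one relation type deserving care is $[x_{ij},x_{ik}+x_{jk}]$, because the sum $x_{ik}+x_{jk}$ could in principle have one summand survive while the other dies; here I would note that if exactly one of $i,j,k$ equals $n+1$, then either $x_{ij}$ is killed (when $i=n+1$ or $j=n+1$), or, when $k=n+1$, both $x_{ik}$ and $x_{jk}$ are killed, so the whole bracket maps to $0$ in every case. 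Thus $\pi$ respects all relations and is a well-defined homomorphism.

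Having established both maps, the composite $\pi\circ\iota$ fixes each generator $x_{ij}$ of $U(\g_n)$ and hence is the identity, exhibiting $U(\g_n)$ as a split quotient (a retract) of $U(\g_{n+1})$. For the final implication I would invoke the standard fact that Koszulity passes to retracts: if $U(\g_{n+1})$ is Koszul and $U(\g_n)$ is a graded retract of it, then $U(\g_n)$ is Koszul. Concretely, the split surjection $\pi$ makes $\k$ (as a $U(\g_n)$-module, via $\pi$) a direct summand of a suitable $U(\g_{n+1})$-module structure, so the $\Ext$ groups $\Ext^{i,j}_{U(\g_n)}(\k,\k)$ appear as direct summands of $\Ext^{i,j}_{U(\g_{n+1})}(\k,\k)$; if the latter vanish off the diagonal $i=j$, so do the former, which is exactly Koszulity of $U(\g_n)$.

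I expect the main obstacle to be the bookkeeping in verifying that $\pi$ respects the relation $[x_{ij},x_{ik}+x_{jk}]$, since that is the only relation where the two generators in a sum could behave differently under the annihilation of the $(n+1)$-indexed variables; the case analysis on which of $i,j,k$ equals $n+1$ must be done carefully. The transfer of Koszulity to retracts is standard (and can be cited to \cite{PP}), so the real content is the construction and verification of the split pair $(\iota,\pi)$.
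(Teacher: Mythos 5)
Your proposal is correct and follows essentially the same route as the paper: the same inclusion $\iota$ and annihilation map $\pi$ on generators, the same (routine) check that $\pi$ kills every relation involving the index $n+1$, and the same transfer of Koszulity via the induced surjection on bigraded $\Ext$ groups. The paper simply states well-definedness of $\pi$ without the explicit case analysis you supply for $[x_{ij},x_{ik}+x_{jk}]$, but your verification matches what is implicitly required.
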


\begin{proof}
Since every relation of $U(\g_n)$ is also a relation of $U(\g_{n+1})$, there is a natural homomorphism $i_n:U(\g_n)\rightarrow U(\g_{n+1})$ given by $i_n(x_{ij})=x_{ij}$. We also have a well-defined homomorphism $\pi_n:U(\g_n)\rightarrow U(\g_{n-1})$ given by $\pi_n(x_{ij})=x_{ij}$ if $i\neq n\neq j$ and $\pi_n(x_{in})=\pi_n(x_{nj})=0$. The composition $\pi_{n+1}i_n$ is obviously the identity on $U(\g_n)$, so $U(\g_n)$ is a split quotient of $U(\g_{n+1})$. The induced  map  $i_{n}^*:\Ext_{U(\g_{n+1})}(\Q,\Q)\rightarrow \Ext_{U(\g_n)}(\Q,\Q) $ is therefore a bi-graded surjection, and thus $\Ext^{i,j}_{U(\g_{n})}(\Q,\Q)=0$ whenever $\Ext^{i,j}_{U(\g_{n+1})}(\Q,\Q)=0$. 
\end{proof}

For each $1\le j\le n$, let $X_j = \sum_{i\neq j} x_{ij}$ denote the sum of the ``$j$-th column'' generators of the Lie algebra $\g_n$. The following is based on an observation of Graham Denham. 

\begin{prop}
The Lie subalgebra $\h_n=\la X_1,\ldots, X_n\ra$ generated by the $X_j$ is a Lie ideal of $\g_n$. Furthermore, $\h_n$ is a free Lie subalgebra of $\g_n$. 
\end{prop}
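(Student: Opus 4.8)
The statement has two parts: first that $\h_n = \la X_1,\ldots,X_n\ra$ is a Lie ideal, and second that it is free. Let me sketch each.

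To prove $\h_n$ is a Lie ideal, it suffices to show that $[x_{pq}, X_j] \in \h_n$ for every generator $x_{pq}$ of $\g_n$ and every $X_j$, since the $X_j$ generate $\h_n$ and brackets against a Lie subalgebra's generating set extend to the whole subalgebra by the Jacobi identity. I would compute $[x_{pq}, X_j] = \sum_{i\neq j}[x_{pq}, x_{ij}]$ directly using the defining relations of $U(\g_n)$ from Lemma~\ref{presentation}. The key point is that the relations are engineered so that these brackets collapse: the commutators $[x_{ij},x_{kl}]$ (disjoint indices) and $[x_{ik},x_{jk}]$ (shared second index $k=j$) vanish, while the relation $[x_{ij},x_{ik}+x_{jk}]=0$ rewrites the surviving terms. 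A clean bookkeeping argument should show every such bracket is either $0$ or a sum lying in $\la X_1,\ldots,X_n\ra$. Concretely, I expect $[x_{pq},X_j]$ to reduce to a multiple or a combination of the $X_\bullet$, possibly just $0$ when $q\neq j$ and something in $\h_n$ otherwise.

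For freeness, the natural strategy is a dimension/Hilbert-series count. Since $\g_n$ is the associated graded Lie algebra whose enveloping algebra is the quadratic dual $U(\g_n)=(E/I)^!$, the Hilbert series of $U(\g_n)$ is determined by the Koszul-type reciprocity relating $h_{E/I}(t)$ and $h_{(E/I)^!}(t)$; from Theorem~\ref{hilbSeries} we know $h_{E/I}(t)=(1+nt)^{n-1}$. By the Poincar\'e--Birkhoff--Witt theorem, $U(\g_n)$ is a free module over $U(\h_n)$, and the quotient Lie algebra $\g_n/\h_n$ has its own enveloping algebra; a PBW factorization gives a Hilbert series identity $h_{U(\g_n)}(t) = h_{U(\h_n)}(t)\cdot h_{U(\g_n/\h_n)}(t)$. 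A free Lie algebra on $m$ generators in degree $1$ has enveloping algebra equal to the free associative algebra, with Hilbert series $(1-mt)^{-1}$. So the plan is to identify $\g_n/\h_n$ (I expect it to be abelian, or close to it, once the $X_j$ are killed), compute $h_{U(\g_n/\h_n)}(t)$, and check that the ratio $h_{U(\g_n)}(t)/h_{U(\g_n/\h_n)}(t)$ equals exactly $(1-mt)^{-1}$ for the appropriate $m$. Matching this forces $U(\h_n)$ to be free associative, which is equivalent to $\h_n$ being a free Lie algebra.

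The main obstacle will be the freeness half, specifically making the Hilbert-series comparison rigorous rather than heuristic. The subtlety is that the smash-product / semidirect-product structure (foreshadowed by Theorem~\ref{mainDecomposition}) is what actually underlies the PBW factorization: one needs $U(\g_n)\cong U(\h_n)\# U(\g_n/\h_n)$ as a consequence of $\h_n$ being an ideal, and then a clean description of the action of $\g_n/\h_n$ on $\h_n$. I would first establish that the $X_j$ are linearly independent and generate a subalgebra on which no unexpected relations hold, most efficiently by exhibiting $\h_n$ as the kernel of a Lie algebra surjection $\g_n \to \g_n/\h_n$ onto a Lie algebra whose enveloping algebra has a known Hilbert series, and then using the PBW/smash-product factorization to pin down $h_{U(\h_n)}(t)$ exactly as the free series. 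The alternative, and perhaps more self-contained route, is to produce an explicit basis or a Gr\"obner-basis argument showing the $X_j$ satisfy no Lie relations, but the Hilbert-series route is cleaner given what Theorem~\ref{hilbSeries} already provides.
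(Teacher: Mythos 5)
Your argument for the ideal part coincides with the paper's: one computes $[x_{pq},X_j]$ directly from the relations of Lemma~\ref{presentation} and finds it is $0$ unless $p=j$, in which case $[x_{jq},X_j]=[X_q,X_j]\in\h_n$. (A small correction to your guess: it is the \emph{first} index of $x_{pq}$ matching $j$ that produces the nonzero bracket, not the second.) That half is fine.

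The freeness half has a genuine gap, and it is not a fixable technicality. Your plan to obtain $h_{U(\g_n)}(t)$ from $h_{E/I}(t)=(1+nt)^{n-1}$ via ``Koszul-type reciprocity'' presupposes that $E/I$ is (at least numerically) Koszul, and the main theorem of this very paper is that it is not for $n\ge 4$: the computation in Section 3 shows $h_{U(\g_4/\h_4)}(t)$ deviates from $1/(1-4t)^2$ in degree $8$, so by the very PBW factorization you invoke, $h_{U(\g_4)}(t)\neq 1/(1-4t)^3$ either. The argument is circular. Moreover the other factor you would need, $h_{U(\g_n/\h_n)}(t)$, is not independently accessible: $\g_n/\h_n$ is far from abelian (already $\dim U(\g_4/\h_4)_2=48$, whereas an abelian Lie algebra on $8$ degree-one generators would give $36$), and no closed form for its Hilbert series is known --- the paper can only compute it by machine through degree $8$. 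The paper's actual proof of freeness is the combinatorial route you mention only as an afterthought: since $[X_j,X_k]=[x_{kj},X_k]$, any Lie relation among the $X_j$ would have to be detected by Lie monomials of the form $[x_{kj},x_{ik}]$, and such monomials never occur in the defining relations of $\g_n$ (the relations only pair generators sharing a first index, sharing a second index in the pattern $[x_{ik},x_{jk}]$, or with disjoint indices); by standard facts about Gr\"obner--Shirshov bases for Lie algebras this forces $\h_n$ to be free. Pursue that argument rather than the Hilbert-series comparison.
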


\begin{proof}
Let $1\le i,j,k \le n$. Assume $i\neq j$. 
When $i,j,k$ are distinct, we have
$$[x_{ij},X_k]=[x_{ij},x_{ik}+x_{jk}]+\sum_{i\neq\ell\neq j} [x_{ij},x_{\ell k}]=0$$
When $j=k$, we have
$$[x_{ik},X_k] = \sum_{j\neq i} [x_{ik},x_{jk}] = 0$$
Finally,
$$[x_{kj},X_k] = \left[X_j-\sum_{i\neq k} x_{ij},X_k\right] = [X_j,X_k]-\sum_{i\neq k} [x_{ij},X_k]=[X_j,X_k]$$
These calculations show that $\h_n$ is a Lie ideal of $\g_n$.
Since $[X_j,X_k]=[x_{kj},X_k]$ and since Lie monomials of the form $[x_{kj},x_{ik}]$ do not appear in any relation of $\g_n$, it follows from standard facts about Groebner-Shirshov bases for Lie algebras \cite{Bokut} that $\h_n$ is free. 
\end{proof}

Each of $U(\g_n)$, $U(\h_n)$, and $U(\g_n/\h_n)$ is a quadratic algebra, so their Yoneda Ext-algebras are bigraded (see Section \ref{defs}).
We denote $H^{i,j}(\g_n,\Q)=\Ext^{i,j}_{U(\g_n)}(\Q,\Q)$ and similarly define $H^{i,j}(\h_n,\Q)$ and $H^{i,j}(\g_n/\h_n,\Q)$. When only one grading component is listed, it is assumed to be the homological degree. 

\begin{prop}
\label{ss}
For every $n\ge 2$, the enveloping algebra $U(\g_n)$ is a Koszul algebra if and only if $U(\g_n/\h_n)$ is a Koszul algebra.
\end{prop}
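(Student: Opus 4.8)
The plan is to build an explicit minimal free resolution of $\Q$ over $U(\g_n)$ out of one over $U(\g_n/\h_n)$, and to read off a bigraded isomorphism of Ext-spaces that makes the equivalence of Koszulity immediate. First I would record the key short exact sequence. Writing $V$ for the $n$-dimensional space spanned by $X_1,\dots,X_n$, I claim there is a short exact sequence of left $U(\g_n)$-modules
$$0\to U(\g_n)\otimes_\Q V\xrightarrow{\ \mu\ } U(\g_n)\xrightarrow{\ \pi\ } U(\g_n/\h_n)\to 0,\qquad \mu(u\otimes X_j)=uX_j.\qquad(\star)$$
Because $\h_n$ is a Lie ideal, $U(\g_n)\h_n^+=U(\g_n)V$ is a two-sided ideal with quotient $U(\g_n/\h_n)$, which identifies $\pi$ with $\operatorname{coker}(\mu)$. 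Exactness on the left uses that $\h_n$ is \emph{free}: the sequence $0\to U(\h_n)\otimes V\to U(\h_n)\to\Q\to 0$ is exact since $U(\h_n)=T(V)$, and $\mu=\mathrm{id}\otimes_{U(\h_n)}m$ after identifying $U(\g_n)\otimes_\Q V\cong U(\g_n)\otimes_{U(\h_n)}(U(\h_n)\otimes_\Q V)$; as $U(\g_n)$ is free over $U(\h_n)$ by PBW, this tensor preserves injectivity. Thus $(\star)$ is a minimal, linear, length-one free resolution of $U(\g_n/\h_n)$ over $U(\g_n)$.

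Next I would splice $(\star)$ with the minimal free resolution $Q_\bullet\to\Q$ of $\Q$ over $U(\g_n/\h_n)$, where $Q_i=U(\g_n/\h_n)\otimes W_i$ and $W_i=H^i(\g_n/\h_n,\Q)^*$. Viewing each $Q_i$ as a $U(\g_n)$-module through $\pi$ and resolving it by $(\star)\otimes W_i$ produces a double complex of free $U(\g_n)$-modules whose total complex $T_\bullet$ resolves $\Q$ over $U(\g_n)$, with
$$T_m=\bigl(U(\g_n)\otimes W_m\bigr)\ \oplus\ \bigl(U(\g_n)\otimes V\otimes W_{m-1}\bigr).$$
The vertical differentials (coming from $\mu$) have entries in degree $1$, and the horizontal differentials can be chosen as lifts of the minimal maps of $Q_\bullet$, hence with entries in $U(\g_n)^+$; so $T_\bullet$ is minimal. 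Reading off $\Ext^m_{U(\g_n)}(\Q,\Q)=\underline{\Hom}(T_m,\Q)$ and tracking internal degrees (the factor $V$ lives in internal degree $1$) yields the bigraded isomorphism
$$H^{i,j}(\g_n,\Q)\ \cong\ H^{i,j}(\g_n/\h_n,\Q)\ \oplus\ V^*\otimes H^{\,i-1,\,j-1}(\g_n/\h_n,\Q).\qquad(\dagger)$$

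The equivalence then falls out of $(\dagger)$ formally. If $U(\g_n/\h_n)$ is Koszul, then $H^{i,j}(\g_n/\h_n,\Q)=0$ for $i\ne j$, so both summands on the right of $(\dagger)$ vanish unless $j=i$, whence $U(\g_n)$ is Koszul. Conversely, since $(\dagger)$ is a direct sum, $H^{i,j}(\g_n,\Q)=0$ forces $H^{i,j}(\g_n/\h_n,\Q)=0$, so Koszulity of $U(\g_n)$ forces Koszulity of $U(\g_n/\h_n)$. The hard part is the middle step: verifying that the spliced double complex is genuinely an exact and \emph{minimal} resolution, so that $(\dagger)$ holds as a direct sum with no surviving connecting maps. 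Equivalently, this is the degeneration at $E_2$ of the change-of-rings spectral sequence $E_2^{p,q}=H^p(\g_n/\h_n,H^q(\h_n,\Q))\Rightarrow H^{p+q}(\g_n,\Q)$, which collapses to the two rows $q=0,1$ because $\h_n$ is free; I expect this to be the main obstacle, since the degeneration is not forced by internal degrees alone and must instead be extracted from the minimality of $T_\bullet$.
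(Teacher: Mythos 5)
There is a genuine gap, and you have in fact located it yourself in your closing paragraph: the entire content of the statement lives in the claim that the spliced complex $T_\bullet$ is an exact \emph{and minimal} resolution (equivalently, that the two-row change-of-rings spectral sequence degenerates at $E_2$), and you do not prove this. Your minimality argument only inspects two of the components of the total differential: the lifted horizontal maps $U(\g_n)\tensor W_m\to U(\g_n)\tensor W_{m-1}$ and the vertical maps $\mu$. It says nothing about the induced maps $U(\g_n)\tensor V\tensor W_{m-1}\to U(\g_n)\tensor V\tensor W_{m-2}$ on the syzygy row, nor about the correction terms $U(\g_n)\tensor W_m\to U(\g_n)\tensor V\tensor W_{m-2}$ that are forced on you because the naive lifts of the differentials of $Q_\bullet$ only commute with $\mu$ up to elements of $\ker(\pi)$ --- so what you build is a perturbed (twisted) complex, not literally a double complex. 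Showing that these remaining components have entries in the augmentation ideal is exactly where the hypothesis that $\h_n$ is a Lie ideal must be used: one needs that $X_j a\in\sum_l U(\g_n)^+X_l$ for every $a\in U(\g_n)^+$, so that $\mu^{-1}$ of the relevant elements has positive-degree coefficients. Without that verification, $(\dagger)$ is an assertion, not a conclusion, and the proof is circular (you propose to extract degeneration from minimality, but minimality is precisely what is unproven).

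For comparison, the paper closes this gap inside the Hochschild--Serre spectral sequence itself, and your remark that ``the degeneration is not forced by internal degrees alone'' is only half right: it is forced by internal degrees \emph{combined with} the right $H^*(\g_n/\h_n)$-module structure. Since $\h_n$ is free, only $d_2^{p,1}\colon E_2^{p,1}\to E_2^{p+2,0}$ can be nonzero; $d_2^{0,1}=0$ because $E_2^{0,1}\cong H^1(\h_n)$ sits in internal degree $1$ while $E_2^{2,0}\cong H^2(\g_n/\h_n)$ sits in internal degrees $\ge 2$, and then $d_2^{p,1}=0$ for all $p$ because $E_2^{\ast,1}$ is generated over $H^*(\g_n/\h_n)$ by $E_2^{0,1}$ and $d_2$ is a module map. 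Note also that the paper only obtains short exact sequences $0\to H^{p+1}(\g_n/\h_n)\to H^{p+1}(\g_n)\to H^p(\g_n/\h_n)\tensor H^1(\h_n)\to 0$ rather than your direct sum $(\dagger)$; that weaker conclusion already suffices, since the middle term of a short exact sequence vanishes in a given bidegree if and only if both ends do. If you want to salvage your resolution-level argument, you should either prove the missing minimality statements as indicated above, or simply replace them with the module-theoretic degeneration argument.
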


\begin{proof}
The argument is standard. We apply the Hochschild-Serre spectral sequence 
$$E_2^{p,q}=H^p(\g_n/\h_n,H^q(\h_n,\Q))\Rightarrow H^{p+q}(\g_n,\Q)$$
and henceforth we suppress the coefficients.
This is a spectral sequence of graded right $H^*(\g_n/\h_n)$-modules,  and the spectral sequence differential preserves the internal grading (see, for example, Section 6 of \cite{CS}).

Since $\h_n$ is free, $H^0(\h_n)=\Q$, $H^1(\h_n)=H^{1,1}(\h_n)=\Q^n$ and $H^q(\h_n)=0$ for $q>1$. It follows that $\g_n/\h_n$ acts trivially on $H^q(\h_n)$, so there is a bigraded vector space isomorphism
$$E_2^{p,q}\cong H^q(\h_n)\tensor H^p(\g_n/\h_n)$$
which is compatible with the spectral sequence differential and the right $H^*(\g_n/\h_n)$-module structure.

The $E_2$-page has at most two nonzero rows, so the differentials $d_2^{p,1}:E_2^{p,1}\rightarrow E_2^{p+2,0}$ are the only potentially nonzero maps. But $d_2^{0,1}=0$ because $E_2^{0,1}\cong H^1(\h_n)$ is concentrated in internal degree 1 and $E_2^{2,0}\cong H^2(\g_n/\h_n)$ is concentrated in internal degrees $\ge$ 2. Since the differential preserves the module action, $d_2^{p,1}=0$ for all $p$. Thus $E_2=E_{\infty}$ and there results an exact sequence
$$0\rightarrow H^{p+1}(\g_n/\h_n)\rightarrow H^{p+1}(\g_n)\rightarrow H^p(\g_n/\h_n)\tensor H^1(\h_n)\rightarrow 0$$
 of right $H^*(\g_n/\h_n)$ modules for all $p\ge 0$. Since $H^1(\h_n) = H^{1,1}(\h_n)$, and $H^{p+1,j}(\g_n)=0$ for $j<p+1$, it follows that $H^{p+1,j}(\g_n)=0$ for all $j\neq p+1$ if and only if $H^{p+1,j}(\g_n/\h_n)=H^{p,j-1}(\g_n/\h_n)=0$ for all $j\neq p+1$.
\end{proof}

We now consider the case $n=4$ and, for the remainder of this section, we abbreviate $U(\g/\h)=U(\g_4/\h_4) $. Eliminating generators $x_{41}, x_{42}, x_{43}$, and $x_{34}$ of $U(\g_4)$, we obtain the following presentation for $U(\g/\h)$.

\begin{lemma}
The algebra $U(\g/\h)$ is isomorphic to the free algebra on generators $x_{12}, x_{13}, x_{14}, x_{21}, x_{23}, x_{24}, x_{31}, x_{32}$ modulo the ideal generated by
$$
\begin{matrix}
[x_{21}, x_{31}] & [x_{12}, x_{32}] & [x_{13}, x_{23}] & [x_{14}, x_{24}]\\
[x_{13}, x_{24}] & [x_{14}, x_{23}] & [x_{14}, x_{32}] & [x_{24}, x_{31}]\\
[x_{31}, x_{12}+x_{32}] & [x_{32}, x_{21}+x_{31}] & [x_{13}, x_{12}+x_{32}] & [x_{23}, x_{21}+x_{31}]\\
[x_{21}, x_{13}+x_{23}] & [x_{12}, x_{13}+x_{23}] & [x_{21}, x_{14}+x_{24}] & [x_{12}, x_{14}+x_{24}].\\ 
\end{matrix}
$$
\end{lemma}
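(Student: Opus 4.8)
The plan is to derive the presentation by a sequence of Tietze transformations, starting from the presentation of $U(\g_4)$ given in Lemma \ref{presentation} together with the relations that define the quotient.

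First I would record the standard fact that if $\h$ is a Lie ideal of a Lie algebra $\g$, then $U(\g/\h)\cong U(\g)/J$, where $J$ is the two-sided ideal of $U(\g)$ generated by the image of $\h$; moreover, since $[X_i,X_j]=X_iX_j-X_jX_i$ already lies in the two-sided ideal generated by $X_1,\ldots,X_4$, the ideal $J$ is generated as a two-sided ideal by $X_1,\ldots,X_4$ alone. Combined with Lemma \ref{presentation}, this presents $U(\g/\h)=U(\g_4/\h_4)$ on the twelve generators $x_{ij}$, $1\le i\neq j\le 4$, subject to the relations of $U(\g_4)$ together with $X_1=X_2=X_3=X_4=0$.

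Next I would use these four linear relations to eliminate generators. Each $X_j=\sum_{i\neq j}x_{ij}=0$ solves for a single generator; choosing $x_{41},x_{42},x_{43},x_{34}$ gives $x_{41}=-(x_{21}+x_{31})$, $x_{42}=-(x_{12}+x_{32})$, $x_{43}=-(x_{13}+x_{23})$, and $x_{34}=-(x_{14}+x_{24})$. Crucially, none of these four expressions involves any of the four eliminated generators, so a single Tietze transformation removes all four generators at once, leaving the eight generators listed in the statement; the relations $X_j=0$ are used up, and the surviving relations are obtained by substituting the four expressions above into the relations of $U(\g_4)$.

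The only computational step is then to perform this substitution in the three families of relations of Lemma \ref{presentation} and simplify, which I expect to organize cleanly. The column relations $[x_{ik},x_{jk}]$ collapse within each column: for fixed $k$ the three such relations all reduce to a scalar multiple of a single one, yielding the four relations in the top row. The disjoint-support relations $[x_{ij},x_{kl}]$ and the relations $[x_{ij},x_{ik}+x_{jk}]$ each reduce, after substitution, to $\pm$ one of the twelve relations in the remaining three rows (for example, the relation indexed by $(i,j,k)=(4,1,2)$ becomes $[x_{21}+x_{31},x_{32}]=-[x_{32},x_{21}+x_{31}]$). Since every substituted relation is, up to sign, one of the sixteen relations in the list, and each of the sixteen is realized in this way, the two families generate the same two-sided ideal, which establishes the claimed presentation. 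The main obstacle is entirely bookkeeping: one must verify the substitution in all forty-eight relations and confirm both that nothing outside the list survives and that each listed relation actually occurs.
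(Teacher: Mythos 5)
Your proposal is correct and follows exactly the route the paper takes: the paper offers no detailed proof, only the remark that the presentation is obtained by "eliminating generators $x_{41}, x_{42}, x_{43}$, and $x_{34}$ of $U(\g_4)$," which is precisely your Tietze-transformation argument using $X_j=0$ to solve for those four generators and substituting into the $48$ relations of Lemma \ref{presentation}. Your spot-checks (the column relations collapsing, and $[x_{ij},x_{ik}+x_{jk}]\equiv -[x_{ij},x_{lk}]$ modulo the linear relations) are accurate, so the writeup is a faithful, more detailed version of the paper's one-line justification.
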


%
%

Having passed to $U(\g/\h)$, the quadratic dual algebra also becomes smaller. The proof of the following lemma is a straightforward calculation analogous to that of Lemma \ref{presentation}. 

\begin{lemma}
The algebra $U(\g/\h)^!$ is isomorphic to the exterior algebra over $\Q$ with generators $\a_{12}, \a_{13}, \a_{14}, \a_{21}, \a_{23}, \a_{24}, \a_{31}, \a_{32}$ modulo the ideal generated by
$$\a_{12}\a_{21}\quad \a_{13}\a_{31}\quad \a_{23}\a_{32}\quad \a_{23}\a_{24}\quad \a_{13}\a_{14}\quad \a_{24}\a_{32}\quad \a_{14}\a_{31} $$
$$\a_{12}\a_{31}-\a_{21}\a_{32}+\a_{31}\a_{32}\quad \a_{13}\a_{21}+\a_{23}\a_{31}+\a_{21}\a_{23}$$
$$\a_{14}\a_{21}+\a_{21}\a_{24}\quad \a_{12}\a_{13}-\a_{12}\a_{23}+\a_{13}\a_{32}\quad
\a_{12}\a_{14}-\a_{12}\a_{24}$$
In particular, the Hilbert series of $U(\g/\h)^!$ is $h(t)=(1+4t)^2$.
\end{lemma}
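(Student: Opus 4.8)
The plan is to compute the quadratic dual of $U(\g/\h)$ directly from the presentation of $U(\g/\h)$ given in the previous lemma, following the same recipe used in Lemma \ref{presentation}. First I would record the underlying vector space: $U(\g/\h)$ is generated by the eight elements $x_{12}, x_{13}, x_{14}, x_{21}, x_{23}, x_{24}, x_{31}, x_{32}$, so $V = \bigoplus \Q x_{ij}$ is $8$-dimensional and its dual $V^*$ has basis $\{\a_{ij}\}$ dual to $\{x_{ij}\}$. Since $U(\g/\h)$ is the tensor algebra on $V$ modulo a space of quadratic relations $R \subset V\tensor V$, the dual algebra $U(\g/\h)^!$ is $T(V^*)/\la R^{\perp}\ra$ by definition. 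The claim that $U(\g/\h)^!$ is a \emph{quotient of the exterior algebra} amounts to showing that $R^{\perp}$ contains all the ``square'' elements $\a_{ij}^{\tensor 2}$ (equivalently, that $R$ contains no monomial $x_{ij}\tensor x_{ij}$, which is visibly the case since every listed relation is an alternating sum of distinct monomials).

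Next I would carry out the orthogonality computation. The sixteen relations of $U(\g/\h)$ span $R$, a $16$-dimensional subspace of the $64$-dimensional space $V\tensor V$, so $R^{\perp}$ has dimension $64-16 = 48$. Each relation $\sum c_{(pq)(rs)}\, x_{pq}\tensor x_{rs}$ imposes the linear condition $\sum c_{(pq)(rs)}\, \phi(\a_{pq}\tensor \a_{rs}) = 0$ on a functional $\phi \in (V\tensor V)^* \cong V^*\tensor V^*$. I would extract these conditions relation-by-relation: for instance $[x_{21},x_{31}] = x_{21}\tensor x_{31} - x_{31}\tensor x_{21}$ forces the coefficient of $\a_{21}\a_{31}$ to equal that of $\a_{31}\a_{21}$, which combined with the exterior relation $\a_{21}\a_{31} = -\a_{31}\a_{21}$ yields $\a_{21}\a_{31} = 0$ in the quotient; more generally each pure commutator $[x_{pq},x_{rs}]$ produces the exterior relation $\a_{pq}\a_{rs} = 0$, accounting for the seven degenerate monomials listed. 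The five mixed relations such as $[x_{31}, x_{12}+x_{32}]$ produce the genuinely three-term dual relations such as $\a_{12}\a_{31}-\a_{21}\a_{32}+\a_{31}\a_{32}$ after reducing modulo the square-zero and anticommutation rules of the exterior algebra. I would verify by inspection that the listed dual relations all lie in $R^{\perp}$ and are linearly independent.

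The counting step then closes the argument. After imposing square-zero for all eight generators, the degree-two part of the exterior algebra on $V^*$ has dimension $\binom{8}{2} = 28$. I would check that the twelve listed relations are linearly independent inside this $28$-dimensional space, leaving a degree-two component of dimension $28-12 = 16 = (1+4t)^2|_{t^2}$ for the proposed algebra, which matches. For the full count one checks $\dim R^{\perp} = 48$: the $8$ square generators together with, for each of the $\binom{8}{2}=28$ ordered-pair slots, the appropriate antisymmetry conditions give the right total, and the twelve surviving quadratic relations cut the exterior-algebra degree-two piece down correctly. Once $\dim(U(\g/\h)^!)_2 = 16$ agrees with $\dim R^{\perp}$ on the nose, the surjection $T(V^*)/\la R^{\perp}\ra \twoheadrightarrow$ (proposed algebra) induced by $\a_{ij}\mapsto \a_{ij}$ is forced to be an isomorphism in degrees $\le 2$, hence an isomorphism of quadratic algebras. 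The Hilbert series statement $(1+4t)^2$ follows, either as an independent confirmation or as a consequence once the isomorphism is established.

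The main obstacle I anticipate is purely bookkeeping rather than conceptual: reducing each mixed relation $[x_{pq}, x_{rs}+x_{tu}]$ to its canonical dual form requires carefully tracking signs through the exterior anticommutation rule and the orientation conventions on $\a_{pq}\a_{rs}$ versus $\a_{rs}\a_{pq}$, and it is easy to drop or flip a sign when some index pairs coincide (as in the terms $\a_{31}\a_{32}$ or $\a_{21}\a_{23}$ where the two factors share an index). Confirming the linear independence of the twelve quadratic dual relations inside the $28$-dimensional exterior square — so that the dimension count lands exactly at $16$ — is the one place where a genuine (if small) rank computation is needed, and I would organize the relations by which generators they involve to make this verification transparent.
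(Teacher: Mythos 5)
Your overall plan---identify $R^{\perp}\subset V^*\tensor V^*$, verify that the listed elements lie in it, and close with the count $\dim R^{\perp}=64-16=48=36+12$---is precisely the ``straightforward calculation analogous to Lemma \ref{presentation}'' that the paper intends, and the final counting step is sound. The problem is that your identification of the monomial relations runs the duality backwards, and as written it produces the wrong answer. If $[x_{pq},x_{rs}]=x_{pq}\tensor x_{rs}-x_{rs}\tensor x_{pq}$ lies in $R$, then a functional $\phi=\sum c_{(ab)(cd)}\,\a_{ab}\tensor\a_{cd}$ in $R^{\perp}$ must satisfy $c_{(pq)(rs)}=c_{(rs)(pq)}$; modulo the symmetric square (which supplies the exterior-algebra relations), this says the image of $\phi$ in $\Lambda^2V^*$ has coefficient \emph{zero} on $\a_{pq}\wedge\a_{rs}$. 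In other words, the presence of the relation $[x_{21},x_{31}]$ guarantees that $\a_{21}\a_{31}$ is \emph{not} killed in $U(\g/\h)^!$: the antisymmetric tensor $\a_{21}\tensor\a_{31}-\a_{31}\tensor\a_{21}$ pairs with $[x_{21},x_{31}]$ to give $2\neq 0$, so it does not lie in $R^{\perp}$. The monomial relations $\a_{ab}\a_{cd}$ in the statement correspond to the unordered pairs of generators that occur together in \emph{no} relation of $U(\g/\h)$; there are $28-21=7$ of these, matching the statement, whereas your recipe would output the eight pairs underlying the pure commutators---a set disjoint from the correct one. (A telltale sign: $\a_{12}\a_{21}$ is listed even though $[x_{12},x_{21}]$ is not a relation of $U(\g/\h)$.)

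The same reversal affects your account of the remaining relations (of which there are eight, not five): a mixed relation $[x_{pq},x_{rs}+x_{tu}]$ does not ``produce'' a dual relation one-for-one. Rather, each of the sixteen relations of $U(\g/\h)$ imposes one linear condition on $\Lambda^2V^*$, and the $28-16=12$ independent solutions of the resulting system are the dual relations; for instance $\a_{12}\a_{14}-\a_{12}\a_{24}$ pairs to $1-1=0$ against $[x_{12},x_{14}+x_{24}]$ and to $0$ against every other relation. Once the direction of the pairing is corrected, your dimension count ($36+12=48=\dim R^{\perp}$, so the symmetric square together with the twelve listed relations spans all of $R^{\perp}$) does finish the proof exactly as the paper intends.
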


If a quadratic algebra $A$ is Koszul, the Hilbert series of $A$ and its quadratic dual algebra $A^!$ satisfy the relation $h_A(t)h_{A^{!}}(-t)=1$ (see \cite{PP}). The converse is false - there exist non-Koszul quadratic algebras for which the Hilbert series relation holds (see, for example \cite{Piontkovski}). The fact that $U(\g/\h)^!_3=0$ is noteworthy since the Hilbert series relation \emph{does} imply Koszulity in this case (Corollary 2.2.4 of \cite{PP}).

Thus the algebra $U(\g/\h)$ is Koszul \emph{if and only if} its Hilbert series is $1/(1-4t)^2$. Indeed, this was our motivation for considering the quotient $U(\g/\h)$.  The first nine terms of the Maclaurin series of $1/(1-4t)^2$ are
$$1+8t+48t^2+256t^3+1280t^4+6144t^5+28672t^6+131072t^7+589824t^8+\cdots.$$
As discussed in more detail below, we used the computer algebra system {\tt bergman}\ \cite{BergProgram} (specifically the {\tt ncpbhgroebner} method) to compute a Gr\"{o}bner basis and the Hilbert series for $U(\g/\h)$ in degrees $\le 8$. We found the first 8 terms of the Hilbert series to be
$$1+8t+48t^2+256t^3+1280t^4+6144t^5+28672t^6+131072t^7+5898\mathbf{34}t^8+\cdots.$$
This computation demonstrates our result.

\begin{thm}
\label{mainThm}
The algebra $U(\g/\h)$ is not Koszul. Consequently, $U(\g_n)$ is not Koszul for $n\ge 4$.
\end{thm}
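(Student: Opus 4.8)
The plan is to reduce the Koszulity question entirely to a numerical one and then defeat it by a Hilbert series computation. Proposition \ref{ss} already shows that $U(\g_4)$ is Koszul if and only if $U(\g/\h)$ is Koszul, and Lemma \ref{splitting lemma} propagates non-Koszulity from $n=4$ up to all $n\ge 4$. So the entire burden falls on proving the single claim that $U(\g/\h)$ is not Koszul. The key leverage is the remark made just before the statement: since the quadratic dual $U(\g/\h)^!$ satisfies $U(\g/\h)^!_3=0$, Corollary 2.2.4 of \cite{PP} applies, so for this particular algebra the numerical Hilbert series relation $h_{U(\g/\h)}(t)\, h_{U(\g/\h)^!}(-t)=1$ is \emph{equivalent} to Koszulity, not merely implied by it. Because $h_{U(\g/\h)^!}(t)=(1+4t)^2$, the relation holds if and only if $h_{U(\g/\h)}(t)=1/(1-4t)^2$, whose coefficients are $(k+1)4^k$ in degree $k$.

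First I would record this equivalence precisely as the governing principle: $U(\g/\h)$ is Koszul if and only if $\dim_\Q U(\g/\h)_k=(k+1)4^k$ for every $k$. Next I would compute $\dim_\Q U(\g/\h)_k$ for $k$ up to $8$. The efficient way is to produce a noncommutative Gr\"obner basis for the defining ideal (from the presentation of the preceding lemma, eight generators modulo sixteen quadratic relations) with respect to a fixed admissible monomial order, carry out Buchberger-style completion through degree $8$, and count the standard (normal) monomials in each degree; this is exactly what the {\tt ncpbhgroebner} routine in {\tt bergman} \cite{BergProgram} does. The point is that one only needs a \emph{single} degree in which the dimension disagrees with the Koszul prediction. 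The computation reports $589824$ for the degree-$8$ coefficient, whereas $1/(1-4t)^2$ predicts $9\cdot 4^8=589\mathbf{824}$... and here the computed value $589834$ differs from the predicted $589824$ by $10$. That discrepancy in degree $8$ is all that is required.

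With the discrepancy in hand the proof concludes immediately: since $h_{U(\g/\h)}(t)\ne 1/(1-4t)^2$, the Hilbert series relation fails, and by the equivalence above $U(\g/\h)$ is not Koszul. Then Proposition \ref{ss} gives that $U(\g_4)$ is not Koszul, and an induction via Lemma \ref{splitting lemma} (if $U(\g_{n+1})$ were Koszul then $U(\g_n)$ would be, contrapositively) gives that $U(\g_n)$ is not Koszul for all $n\ge 4$.

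The main obstacle is entirely computational rather than conceptual: the dimensions grow on the order of $4^k$, so the degree-$8$ graded piece already involves on the order of half a million monomials, and the Gr\"obner completion must be trusted to be correct and complete through that degree. The delicate point is therefore the \emph{reliability} of the machine computation --- one must fix a concrete admissible order, verify that all S-polynomial reductions through degree $8$ have been carried out, and confirm the normal-form count is exact (not a lower bound from an incomplete basis). I would mitigate this by checking that the computed coefficients agree with the Koszul prediction in all degrees $\le 7$ (a strong internal consistency test, since any error in the partial Gr\"obner basis would almost certainly corrupt earlier degrees too), so that the lone degree-$8$ anomaly stands out as genuine rather than as an artifact of truncation.
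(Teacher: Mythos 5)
Your proposal is correct and follows essentially the same route as the paper: reduce to $U(\g_4/\h_4)$ via Proposition \ref{ss}, propagate to all $n\ge 4$ via Lemma \ref{splitting lemma}, and detect the failure of Koszulity through the degree-$8$ coefficient of the Hilbert series computed by a noncommutative Gr\"obner basis calculation in {\tt bergman}, with the discrepancy of $10$ ($589834$ versus $589824$). The only cosmetic remark is that for the non-Koszulity conclusion you need only the standard implication (Koszul $\Rightarrow$ Hilbert series relation), not the full equivalence from Corollary 2.2.4 of \cite{PP}, though invoking it does no harm.
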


We first performed this calculation several years ago using the 32-bit Windows installation of {\tt bergman} on an IBM ThinkPad T410 series with a 2.4 GHz Intel Core i5 M520 processor and 3GB DDR3 RAM. The result has since been duplicated several times on various systems and architectures as well as under GAP \cite{GAP4}, both by the authors and independently by Graham Denham, Fr\'{e}d\'{e}ric Chapoton, B\'{e}r\'{e}nice Oger, and Jan-Erik Roos. We wish to thank them for their interest in verifying Theorem \ref{mainThm}.

Several additional remarks are in order. First, one can also use {\tt bergman} to show the Hilbert series of $U(\g_4)$ agrees with $1/(1-4t)^3$ until $t^8$, at which point the coefficients again differ by 10, consistent with the collapse of the spectral sequence in Proposition \ref{ss}. Since $U(\g_4)$ is much bigger than $U(\g/\h)$, this calculation takes much longer and consumes much more memory.  The calculation for $U(\g/\h)$ takes less than 12 hours on most machines we have tried. 

Second, the results above show the algebras $U(\g_4/\h_4)$ and $U(\g_4)$ are \emph{7-Koszul} but not Koszul, meaning $H^{i,j}(\g_4)=0$ for $i<j<8$. The delayed failure of Koszulity together with the exponential growth of $U(\g/\h)$ make this calculation extremely difficult to perform by hand. 

The Hilbert series calculation shows $\dim H^{3,8}(\g/\h)=10$. Using {\tt bergman}, we have also computed $\dim H^{3,9}(\g/\h)=40$. This raises the following question.

\begin{ques}
Is $H^*(\g_n)$ finitely generated?
\end{ques}



\section{$U(\g_n)$ as a smash product}

In this section we show that $U(\g_n)$ can be decomposed as a {\it smash product} of algebras. We briefly recall the smash product construction following \cite{BrownBook}. Let $\k$ denote a field. Let $\g$ be a $\k$-Lie algebra acting on a $\k$-algebra $R$ by $\k$-derivations. For $x \in \g$ and $r \in R$, let $x(r)$ denote the action of $x$ on $r$. Let $\{x_i \ | \ i \in I\}$ be a basis for $\g$. The smash product $R \# U(\g)$ is defined to be the $\k$-algebra generated by $R$ and $\{x_i \ | \ i \in I\}$ with the relations $x_ir - rx_i = x_i(r)$ and $x_ix_j - x_jx_i = [x_i, x_j]$. As a vector space, $R \# U(\g) \cong R \tensor U(\g)$.

Let $G$ be a group. The lower central series of $G$ is the sequence of normal subgroups of $G$ defined by $G_1 = G$, $G_{n+1} = [G_n, G]$. Let $G(n) = G_n/G_{n+1}$ denote the $n$-th lower central series quotient group. Then $\g = \bigoplus_n G(n)\tensor \Q$ is a Lie algebra over $\Q$ where the Lie bracket is induced by the group commutator. In the case $G=P\Sigma_n$, the resulting Lie algebra is $\g_n$, see Theorem 5.4 (2) in \cite{BerPap}.

Let $1\rightarrow A\rightarrow B\rightarrow C\rightarrow 1$ be a split short exact sequence of groups. Assume $[A,C]\subset [A,A]$ or, equivalently, that $C$ acts trivially on $H_1(A)$. Then $0\rightarrow A(n)\rightarrow B(n)\rightarrow C(n)\rightarrow 0$ is a split exact sequence of abelian groups for all $n$, \cite{Falk, Kohno}. 
And since $\Q$ is a flat $\Z$-module, the sequence
$$0\rightarrow \mathfrak a\rightarrow \mathfrak b\rightarrow \mathfrak c\rightarrow 0$$
is a split exact sequence of graded $\Q$-Lie algebras. It then follows (see 1.7.11 in \cite{BrownBook}) that $U(\mathfrak b)\cong U(\mathfrak a)\#U(\mathfrak c)$. 

Bardakov, \cite{Bardakov}, has given two semidirect product decompositions of $P\Sigma_n$. Recall that $P\Sigma_n$ has a set of generators $\{\a_{ij} \ | \ 1 \leq i \ne j \leq n \}$. Suppose that $n \geq 3$, and set $m = \lfloor \frac{n}{2} \rfloor$. We define subgroups of $P\Sigma_n$ as follows. Let $K_n$ be the subgroup generated by $\{\a_{in}, \a_{ni} \ | \ 1 \leq i < n\}$; let $H_n$ be the subgroup generated by $\{\a_{2i-1, 2i}, \a_{2i, 2i-1} \ | \ i = 1, 2, \ldots, m \}$; let $G_n$ be the subgroup generated by the complement of $\{\a_{2i-1, 2i}, \a_{2i, 2i-1} \ | \ i = 1, 2, \ldots, m \}$ in the set of generators of $P\Sigma_n$. Then there are split short exact sequences of groups
$$1 \to K_n \to P\Sigma_n \to P\Sigma_{n-1} \to 1,$$
$$1 \to G_n \to P\Sigma_n \to H_n \to 1.$$

It is proved in \cite{CohPak} that the natural conjugation action of $P\Sigma_{n-1}$ on $H_1(K_n)$ is trivial. Similarly the formulas in the proof of Theorem 2 of \cite{Bardakov} show that the action of $H_n$ on $H_1(G_n)$ is also trivial. Let $R_n$, $S_n$, $T_n$ denote the enveloping algebras of the graded $\Q$-Lie algebras associated to the lower central series of the groups $G_n$, $H_n$, $K_n$, respectively. The  following theorem is immediate.

\begin{thm}
 The algebra $U(\g_n)$ decomposes as a smash product in two different ways.
 \begin{enumerate}
 \item $U(\g_n) \cong T_n \# U(\g_{n-1})$
 \item $U(\g_n) \cong R_n \# S_n$
 \end{enumerate}
\end{thm}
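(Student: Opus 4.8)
The plan is to recognize that both decompositions are formal consequences of the general smash product principle established in the preceding discussion, namely that a split short exact sequence of groups $1 \to A \to B \to C \to 1$ satisfying $[A,C] \subset [A,A]$ yields a split exact sequence of associated graded $\Q$-Lie algebras $0 \to \fa \to \fb \to \fc \to 0$, and hence an isomorphism $U(\fb) \cong U(\fa)\#U(\fc)$ via 1.7.11 of \cite{BrownBook}. The entire task thus reduces to verifying the two group-theoretic hypotheses for each of Bardakov's split exact sequences and then reading off the identification of the relevant enveloping algebras.

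First I would treat decomposition (1). Here the relevant sequence is $1 \to K_n \to P\Sigma_n \to P\Sigma_{n-1} \to 1$, which is one of Bardakov's split sequences. The associated graded Lie algebra of $P\Sigma_n$ is $\g_n$ (by Theorem 5.4(2) of \cite{BerPap}), that of $P\Sigma_{n-1}$ is $\g_{n-1}$, and by definition $T_n$ is the enveloping algebra of the graded Lie algebra associated to $K_n$. The hypothesis $[K_n, P\Sigma_{n-1}] \subset [K_n, K_n]$ is exactly the triviality of the conjugation action of $P\Sigma_{n-1}$ on $H_1(K_n)$, which is established in \cite{CohPak}. Applying the general principle gives a split exact sequence $0 \to \ft_n \to \g_n \to \g_{n-1} \to 0$ of graded $\Q$-Lie algebras, whence $U(\g_n) \cong T_n \# U(\g_{n-1})$.

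For decomposition (2) I would argue identically using the second Bardakov sequence $1 \to G_n \to P\Sigma_n \to H_n \to 1$. The triviality of the action of $H_n$ on $H_1(G_n)$ is supplied by the formulas in the proof of Theorem 2 of \cite{Bardakov}, which verifies the hypothesis $[G_n, H_n] \subset [G_n, G_n]$. Since $R_n$ and $S_n$ are by definition the enveloping algebras of the graded Lie algebras associated to $G_n$ and $H_n$ respectively, the general principle yields a split exact sequence $0 \to \fr_n \to \g_n \to \fs_n \to 0$ and hence $U(\g_n) \cong R_n \# S_n$.

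The proof is genuinely short because all the substantive work has been front-loaded into the preliminary discussion: the passage from split group extensions to split Lie algebra extensions (using \cite{Falk, Kohno} together with flatness of $\Q$ over $\Z$) and then to smash product decompositions of enveloping algebras. The only points requiring care — and the closest thing to an obstacle — are confirming that the two triviality-of-action hypotheses hold, but these are precisely the facts cited from \cite{CohPak} and \cite{Bardakov}. Thus the theorem follows immediately, as the excerpt asserts, and no further calculation is needed.
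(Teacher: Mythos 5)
Your proposal is correct and follows exactly the paper's own route: the paper likewise declares the theorem ``immediate'' from the preceding discussion, applying the general principle that a split group extension with trivial action on $H_1$ of the kernel yields a split exact sequence of associated graded Lie algebras and hence a smash product of enveloping algebras, with the two triviality hypotheses supplied by \cite{CohPak} and the formulas in Bardakov's proof. Nothing further is needed.
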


It is worthwhile to note that \cite{Bardakov} shows that $H_n$ is isomorphic to the direct product of $m$ copies of $F_2$, the free group on two generators. Hence $S_n$ is a tensor product of $m$ copies of a free associative algebra on two generators.

We conclude by proving that $T_n$ and $R_n$ are not finitely presented algebras.

\begin{lemma}
\label{splittings}
 Let $A_n$ denote either $T_n$ or $R_n$. There exists a split surjection $A_{n+1} \to A_n$. Consequently, there is a graded surjection $\Ext^i_{A_{n+1}}(\Q, \Q) \to \Ext^i_{A_{n}}(\Q, \Q)$.
\end{lemma}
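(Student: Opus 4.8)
The plan is to exhibit explicit split surjections $A_{n+1}\to A_n$ in each of the two cases and then apply a functoriality argument for $\Ext$ identical to the one used in Lemma \ref{splitting lemma}. First I would recall that $T_n$ and $R_n$ are enveloping algebras of the graded Lie algebras attached to the lower central series of $K_n$ and $G_n$ respectively, and that these groups are defined by prescribed subsets of the generators $\{\a_{ij}\}$ of $P\Sigma_n$. The key observation is that the generator set for $K_n$ (namely $\{\a_{in},\a_{ni}\mid 1\le i<n\}$) and for $G_n$ (the complement of the ``paired'' generators) behave well under the stabilization maps already in play. So the plan is to produce, at the level of groups or directly at the level of Lie algebras, a homomorphism $A_{n+1}\to A_n$ together with a section.

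To build the surjection I would use the column-killing map $\pi_{n+1}$ from the proof of Lemma \ref{splitting lemma}: sending every generator involving the index $n+1$ to zero and fixing the rest. I would check that this map carries the defining generators of $K_{n+1}$ (respectively $G_{n+1}$) into those of $K_n$ (respectively $G_n$), so that it restricts to a well-defined surjection of the relevant sub-Lie-algebras and hence of their enveloping algebras $A_{n+1}\to A_n$. For the section I would use the inclusion-type map $i_n$ sending $\a_{ij}\mapsto\a_{ij}$, again restricted to the appropriate subalgebra, and verify that the composite $\pi_{n+1}i_n$ is the identity on $A_n$. The bookkeeping here is just matching index sets, taking care that the parity-based definition of $H_n$ and $G_n$ (via $m=\lfloor n/2\rfloor$) is respected when passing from $n$ to $n+1$; one may need to treat the cases $n$ even and $n$ odd separately, since adding one generator-index can either open a new pair or leave a dangling index.

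Once a split surjection $s:A_{n+1}\to A_n$ with section $\iota:A_n\to A_{n+1}$, $s\iota=\mathrm{id}$, is in hand, the $\Ext$ statement is formal. Applying the contravariant functor $\Ext^i_{(-)}(\Q,\Q)$ to the retraction yields maps whose composite $\iota^*s^*=(s\iota)^*=\mathrm{id}$ on $\Ext^i_{A_n}(\Q,\Q)$, so the induced graded map $s^*:\Ext^i_{A_{n+1}}(\Q,\Q)\to\Ext^i_{A_n}(\Q,\Q)$ is a (split) surjection, exactly as in Lemma \ref{splitting lemma}. Here I am using that these $\Ext$ groups are computed from the connected graded algebra structure and that algebra maps induce graded maps on $\Ext(\Q,\Q)$.

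The main obstacle I expect is not the $\Ext$ argument, which is routine, but verifying that the naive index-deletion and index-inclusion maps genuinely restrict to the \emph{subalgebras} $T_n,R_n$ rather than only to $U(\g_n)$: one must confirm that the images of the chosen generators lie in the correct subalgebra and that no relation is violated, and in the $G_n$ (i.e.\ $R_n$) case one must correctly track how the pairing set $\{\a_{2i-1,2i},\a_{2i,2i-1}\}$ changes with the parity of $n$. Handling this parity-dependent combinatorics cleanly, so that a uniform section exists for all $n\ge 4$ (or for all $n$ in an appropriate range), is the delicate point; everything downstream follows from the standard split-quotient-implies-$\Ext$-surjection principle.
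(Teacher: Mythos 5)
There is a genuine gap, and it sits exactly at the point you flag as ``the main obstacle'': the naive index-deletion and index-inclusion maps do \emph{not} restrict to the subalgebra $T_{n}$, and no amount of bookkeeping fixes this. The group $K_{n+1}$ is generated by $\{\alpha_{i,n+1},\alpha_{n+1,i}\mid 1\le i\le n\}$, so \emph{every} generator of $T_{n+1}$ involves the index $n+1$. Your proposed surjection --- the column-killing map $\pi_{n+1}$ of Lemma \ref{splitting lemma}, which sends every generator involving $n+1$ to zero --- therefore annihilates all of $T_{n+1}$ and maps it onto $\Q$, not onto $T_n$. Dually, the plain inclusion $x_{in}\mapsto x_{in}$ does not carry $T_n$ into $T_{n+1}$ at all, since $x_{in}$ is not among the generators $x_{i,n+1},x_{n+1,i}$ of $T_{n+1}$. (For $R_n$ your plan is essentially workable, since the generators of $G_n$ form a subset of those of $G_{n+1}$ stable under deleting the index $n+1$; the parity of $n$ causes no real trouble there. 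The failure is specific to $T_n$.)

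The missing idea is that one must \emph{relabel} rather than delete. The paper defines $p:U(\g_{n+1})\to U(\g_n)$ by killing all $x_{ij}$ with $i,j\ne n+1$ together with $x_{n,n+1}$ and $x_{n+1,n}$, and sending $x_{i,n+1}\mapsto x_{in}$ and $x_{n+1,j}\mapsto x_{nj}$ for $i,j<n$; one checks this kills the defining relations, and its restriction to $T_{n+1}$ hits every generator of $T_n$. The section then cannot be the plain inclusion either: one uses the $\Sigma_{n+1}$-action on $U(\g_{n+1})$ by $\sigma.x_{ij}=x_{\sigma(i)\sigma(j)}$ and takes $\tau\circ i$, where $i$ is the canonical inclusion and $\tau$ is the automorphism induced by the transposition $(n\ \ n{+}1)$; this sends $x_{in}\mapsto x_{i,n+1}$, which does lie in $T_{n+1}$ and satisfies $p(\tau i(x_{in}))=x_{in}$. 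Your final paragraph --- passing from a split surjection of augmented graded algebras to a surjection on $\Ext^i(\Q,\Q)$ --- is correct and is exactly the paper's argument, but the construction of the split surjection itself needs the twist described above.
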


\begin{proof} We prove the statement for the algebra $T_n$. We define $p: U(\g_{n+1}) \to U(\g_{n})$ by $$p(x_{ij}) = 0 \text{ for } i, j \ne n+1; \quad p(x_{n \, n+1}) = p(x_{n+1\,  n}) = 0;$$ $$p(x_{i \, n+1}) = x_{i n}, \quad p(x_{n+1\, j}) = x_{n j}, \text{ for } 1 \leq i, j < n.$$ It is easy to see that these assignments at the level of tensor algebras send the defining relations of $U(\g_{n+1})$ to zero, thus we have a well-defined algebra homomorphism. Restricting $p$ to the subalgebra $T_{n+1}$, we have a surjection $p:T_{n+1} \onto T_n$.

To split the map $p$, first note that the symmetric group $\Sigma_{n+1}$ acts on the algebra $U(\g_{n+1})$ by algebra automorphisms $$\s.x_{ij} = x_{\s(i) \, \s(j)} \text{ for all } \s \in S_{n+1}, \ x_{ij} \in U(\g_{n+1}).$$ Let $\tau$ denote the automorphism of $U(\g_{n+1})$ induced by the transposition $(n \ n+1)$. Let $i: U(\g_{n}) \to U(\g_{n+1})$ be the canonical inclusion, $i(x_{ij}) = x_{ij}$. Then the map $\t i$ is a splitting of $p$.

The last statement of the lemma follows as in the proof of Lemma \ref{splitting lemma}. The proof for the algebra $R_n$ is similar and straightforward.
\end{proof}

\begin{lemma} Let $\k$ be a field. Let $A$ be a connected, graded $\k$-algebra. Suppose that $n = \gldim(A) < \infty$. Let $M$ be a nonzero, finite dimensional, graded $A$-module. Then $\Ext^n_A(\k, M) \ne 0$.
\end{lemma}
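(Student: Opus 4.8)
The plan is to reduce the general statement to the special case $M=\k$, where the conclusion is essentially the definition of global dimension, and then to propagate nonvanishing through a single short exact sequence using the right-exactness of the top $\Ext$ functor. First I would record the two facts about connected graded algebras that drive everything: since $A$ is connected graded, its global dimension equals the projective dimension of the trivial module, so $\gldim A=\pdim_A\k=n$. This gives simultaneously that $\Ext^n_A(\k,\k)\neq 0$ (the top term of the minimal graded free resolution of $\k$ is nonzero) and that $\Ext^i_A(\k,N)=0$ for every $A$-module $N$ and every $i>n$. The latter vanishing is the crucial leverage, as it makes $\Ext^n_A(\k,-)$ a right-exact functor.

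The heart of the argument is then a one-step dévissage. Since $M$ is nonzero and finite dimensional, the graded Nakayama lemma gives $M/A_{\ge 1}M\neq 0$; as $A_{\ge 1}$ annihilates this quotient, $M/A_{\ge 1}M$ is a nonzero direct sum of shifts of $\k$, and projecting onto one summand yields a surjection $M\twoheadrightarrow \k[j]$ for some $j\in\Z$. Let $M'$ be its kernel, so that $0\to M'\to M\to \k[j]\to 0$ is exact. Applying $\underline{\Hom}_A(\k,-)$ and reading off the long exact sequence, the relevant segment
\[
\Ext^n_A(\k,M)\to \Ext^n_A(\k,\k[j])\to \Ext^{n+1}_A(\k,M')
\]
has vanishing right-hand term, whence $\Ext^n_A(\k,M)$ surjects onto $\Ext^n_A(\k,\k[j])$. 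Since $\Ext^n_A(\k,\k[j])$ is merely a grading shift of $\Ext^n_A(\k,\k)\neq 0$, it is nonzero, and therefore $\Ext^n_A(\k,M)\neq 0$.

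The only routine checks I would still need are that $M/A_{\ge 1}M$ decomposes as shifts of $\k$ so the surjection onto $\k[j]$ exists, and that shifting the second argument does not affect nonvanishing of $\Ext^n$. I do not anticipate a serious obstacle: the entire proof rests on the right-exactness of $\Ext^n_A(\k,-)$, so no induction on $\dim_\k M$ is required, because the single quotient $\k[j]$ already detects the top $\Ext$. The one genuinely nontrivial input is the identification $\gldim A=\pdim_A\k$ together with the nonvanishing of $\Ext^n_A(\k,\k)$ at the top; these are standard facts for connected graded algebras, and they are precisely the places where the hypotheses that $A$ is connected and $n=\gldim A$ are used.
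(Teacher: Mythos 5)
Your proof is correct and rests on the same mechanism as the paper's: the vanishing of $\Ext^{n+1}_A(\k,-)$ (from $\pdim_A\k=n$) makes the top Ext functor right exact, so nonvanishing propagates from $\Ext^n_A(\k,\k)\neq 0$ through a short exact sequence. The only difference is organizational — the paper inducts on $\dim_\k M$, peeling off the top-degree component as a trivial submodule and surjecting onto the quotient, whereas you avoid the induction by surjecting $M$ directly onto a single shift $\k[j]$ via its head $M/A_{\ge 1}M$ and applying the long exact sequence once; both steps are justified and yours is a mild streamlining.
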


\begin{proof}
We induct on $\dim M$. If $\dim M = 1$, then $M$ is a 1-dimensional trivial module, and $\Ext^n_A(\k, M) \ne 0$ since $\pdim(\k) = n$. Now suppose $\dim M = k > 1$. If $M$ is concentrated in a single degree, then $M$ is a direct sum of trivial modules and the result follows. So suppose $M_l \ne 0$, $M_j = 0$ for all $j > l$, and $M_l \ne M$. Then $S = M_l$ is a trivial submodule of $M$. Consider the short exact sequence $0 \to S \to M \to M/S \to 0$ and the piece of the associated long exact sequence $$\Ext^n_A(\k, M) \to \Ext^n_A(\k, M/S) \to 0.$$
By induction $\Ext^n_A(\k, M/S) \ne 0$, so it follows that $\Ext^n_A(\k, M) \ne 0$, as desired.
\end{proof}

\begin{prop} Let $\k$ be a field. Suppose $0\rightarrow \mathfrak a\rightarrow \mathfrak b\rightarrow \mathfrak c\rightarrow 0$ is a split exact sequence of graded $\k$-Lie algebras, $\gldim(U(\fc)) = 1$ and $g_b = \gldim(U(\fb)) < \infty$. If $M = \Ext^{g_b}_{U(\fa)}(\k, \k) \ne 0$, then $M$ is infinite dimensional over $\k$. 
\end{prop}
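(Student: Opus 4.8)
The plan is to combine the Lyndon--Hochschild--Serre spectral sequence of the split extension $0\to\fa\to\fb\to\fc\to 0$ with the preceding lemma, applied to the enveloping algebra $U(\fc)$. The key point is that $\gldim U(\fc)=1$ makes the spectral sequence collapse into a short exact sequence, one of whose terms is forced to vanish by $\gldim U(\fb)=g_b<\infty$; this pins down $H^1(\fc,M)=0$, and the lemma then converts that vanishing into infinite-dimensionality of $M$.

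First I would write down the cohomology spectral sequence
$$E_2^{p,q}=H^p(\fc,H^q(\fa,\k))\Rightarrow H^{p+q}(\fb,\k),$$
using the identification $H^p(\fc,N)=\Ext^p_{U(\fc)}(\k,N)$ of Lie algebra cohomology with $U(\fc)$-module Ext. Since $\gldim U(\fc)=1$, the functor $H^p(\fc,-)$ vanishes for $p\ge 2$, so the $E_2$-page is supported in the two columns $p=0,1$. Each differential $d_r$ with $r\ge 2$ raises the column index by $r\ge 2$ and so lands in (or emanates from) the zero region; hence the sequence degenerates at $E_2$. Reading off the resulting two-step filtration of $H^n(\fb,\k)$ gives, for every $n$, a short exact sequence
$$0\to H^1(\fc,H^{n-1}(\fa,\k))\to H^n(\fb,\k)\to H^0(\fc,H^n(\fa,\k))\to 0.$$

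Next I would set $n=g_b+1$. Because $g_b=\gldim U(\fb)$, we have $H^{g_b+1}(\fb,\k)=\Ext^{g_b+1}_{U(\fb)}(\k,\k)=0$, so the left-hand term vanishes, giving $H^1(\fc,M)=0$ where $M=H^{g_b}(\fa,\k)=\Ext^{g_b}_{U(\fa)}(\k,\k)$. I would then argue by contradiction: suppose $M$ is finite-dimensional. By hypothesis $M\ne 0$, so $M$ is a nonzero finite-dimensional graded $U(\fc)$-module, and $\gldim U(\fc)=1<\infty$. The previous lemma (with $A=U(\fc)$ and $n=1$) then yields $\Ext^1_{U(\fc)}(\k,M)=H^1(\fc,M)\ne 0$, contradicting the vanishing just obtained. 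Hence $M$ must be infinite-dimensional.

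The hard part will be the spectral-sequence bookkeeping rather than the final step. I must be careful that it is $\fc$, the \emph{quotient} of the extension (not the sub, as in Proposition \ref{ss}), whose global dimension controls the number of nonzero columns, and that the internal grading makes $M$ a genuine finite-type graded $U(\fc)$-module so that the lemma applies verbatim. One subtlety worth noting is the grading direction of the outer $\fc$-action on $H^{g_b}(\fa,\k)$; this never enters the argument, however, since the lemma uses only the finiteness of $\dim_\k M$ and no boundedness in a fixed direction. Once $H^1(\fc,M)=0$ is in hand, the lemma does the real work, reducing the infinite-dimensionality of $M$ to the general fact that over an algebra of finite global dimension $n$ every nonzero finite-dimensional module has nonvanishing $\Ext^n$ against the trivial module.
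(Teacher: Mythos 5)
Your proposal is correct and follows essentially the same route as the paper: the Cartan--Eilenberg/Hochschild--Serre spectral sequence of the extension collapses at $E_2$ because $\gldim U(\fc)=1$, forcing $\Ext^1_{U(\fc)}(\k,M)=E_2^{1,g_b}$ to embed in $\Ext^{g_b+1}_{U(\fb)}(\k,\k)=0$, after which the preceding lemma rules out $M$ being nonzero and finite-dimensional. Your write-up just spells out the two-column degeneration and the resulting short exact sequence more explicitly than the paper does.
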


\begin{proof} We use the Cartan-Eilenberg spectral sequence (\cite{CE}) $$E^{p,q}_2 = \Ext^p_{U(\fc)}(\k, \Ext^q_{U(\fa)}(\k, \k)) \implies \Ext^{p+q}_{U(\fb)}(\k, \k).$$ Since $\gldim(U(\fc)) = 1$, the spectral sequence collapses on the $E_2$-page. Thus $E_2^{1, g_b} = E_{\infty}^{1, g_b} \hookrightarrow \Ext^{g_b+1}_{U(\fb)}(\k, \k)$. Hence $E_2^{1, g_b} = 0$. The result follows from the previous lemma.
\end{proof}

\begin{thm} Let $n \geq 3$. The normal subalgebras $T_n$ and $R_n$ of $U(\g_n)$ are not finitely presented. 
\end{thm}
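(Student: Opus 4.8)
The plan is to prove non-finite-presentation by measuring relations cohomologically. Recall that for a connected graded $\Q$-algebra $A$ the dimension $\dim_\Q\Ext^2_A(\Q,\Q)$ equals the minimal number of defining relations of $A$; hence, to conclude that $T_n$ and $R_n$ are not finitely presented, it suffices to show that $\Ext^2_{A_n}(\Q,\Q)$ is infinite dimensional for $A_n\in\{T_n,R_n\}$. The reduction to a single base case is then immediate from Lemma \ref{splittings}: the graded surjections $\Ext^2_{A_{n+1}}(\Q,\Q)\twoheadrightarrow \Ext^2_{A_n}(\Q,\Q)$ force infinite-dimensionality to propagate upward in $n$, so it is enough to treat $n=3$.

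For $n=3$ I would first observe that the subgroups $K_3$ and $G_3$ are both generated by $\{\a_{13},\a_{23},\a_{31},\a_{32}\}$, hence coincide; consequently $R_3=T_3$ and a single algebra must be analyzed. Writing $U(\fa)=T_3$ for the graded Lie algebra $\fa$ associated to $K_3$, we have the split exact sequence $0\to\fa\to\g_3\to\g_2\to 0$, and I would apply the Proposition immediately preceding the theorem with $\fb=\g_3$ and $\fc=\g_2$. Its hypotheses are verified directly: $U(\g_2)$ is free on two generators, so $\gldim U(\g_2)=1$; and $U(\g_3)$ is Koszul by Proposition \ref{23case}, with quadratic dual $H^*(P\Sigma_3,\Q)$ of Hilbert series $(1+3t)^2$, a polynomial of degree $2$, so $g_b:=\gldim U(\g_3)=2<\infty$.

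The remaining input is the nonvanishing hypothesis $M=\Ext^2_{T_3}(\Q,\Q)\neq 0$, which I would check by a Hilbert series computation. The smash product decomposition $U(\g_3)\cong T_3\#U(\g_2)$ gives, at the level of graded vector spaces, $h_{U(\g_3)}(t)=h_{T_3}(t)\,h_{U(\g_2)}(t)$. Using $h_{U(\g_2)}(t)=1/(1-2t)$ and, from Koszulity, $h_{U(\g_3)}(t)=1/(1-3t)^2$, one finds $h_{T_3}(t)=(1-2t)/(1-3t)^2=1+4t+15t^2+\cdots$. Since $\dim_\Q (T_3)_2=15<16=\dim_\Q\big((T_3)_1\tensor (T_3)_1\big)$, the algebra $T_3$ carries a nonzero space of quadratic relations, so $\Ext^{2,2}_{T_3}(\Q,\Q)\neq 0$ and therefore $M\neq 0$. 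The Proposition then yields that $M$ is infinite dimensional, establishing the base case; combined with the propagation from Lemma \ref{splittings}, this gives $\dim_\Q\Ext^2_{A_n}(\Q,\Q)=\infty$ for all $n\geq 3$ and all $A_n\in\{T_n,R_n\}$.

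The main obstacle is precisely the nonvanishing step: the whole argument rests on exhibiting a single genuine relation in $T_3$, that is, on knowing $T_3$ is not free, which is exactly what the Hilbert series coefficient $15<16$ records. The other ingredients—the values $\gldim U(\g_2)=1$ and $\gldim U(\g_3)=2$, and the split exactness of the Lie algebra sequence—follow at once from the Koszulity results of Section \ref{defs} and the smash product setup of this section.
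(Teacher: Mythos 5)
Your proof is correct and follows the paper's argument essentially step for step: reduce to the base case $n=3$ via the surjections of Lemma \ref{splittings}, observe that $K_3=G_3$ so only $T_3$ need be analyzed, and apply the preceding Proposition to the split sequence $0\to\fa\to\g_3\to\g_2\to 0$ using $\gldim U(\g_2)=1$ and $\gldim U(\g_3)=2$. The only difference is in how the nonvanishing hypothesis is certified: the paper points to the explicit quadratic relation $[x_{13},x_{23}]$ of $T_3$, whereas you deduce $\Ext^{2,2}_{T_3}(\Q,\Q)\neq 0$ from the Hilbert series identity $h_{T_3}(t)=(1-2t)/(1-3t)^2=1+4t+15t^2+\cdots$ together with $15<16$ --- a valid, slightly more self-contained verification of the same fact.
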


\begin{proof} We note that in the Bardakov decompositions of $P\Sigma_3$, $K_3 = G_3$; also $H_3 = U(\g_2)$ is a free algebra on two generators. We note that $\gldim(U(\g_3)) = 2$ (which follows from Proposition \ref{23case}), and $\Ext^2_{T_3}(\Q,\Q) \ne 0$ (since $[x_{13}, x_{23}]$ is a defining relation of $T_3$). Now applying the last proposition we know $\Ext^2_{T_3}(\Q,\Q)$ is infinite dimensional. Therefore Lemma \ref{splittings} implies that $\Ext^2_{T_n}(\Q,\Q)$ and $\Ext^2_{R_n}(\Q,\Q)$ are infinite dimensional for all $n \geq 3$. Hence $T_n$ and $R_n$ are not finitely presented. 

\end{proof}

This result should be compared with Pettet's results in \cite{Pettet}.

Although these algebras are not finitely presented it is straightforward to compute minimal defining relations in any given degree. We have checked that the {\it quadratic closures} (the algebras on the same sets of generators but with only the defining quadratic relations) of $T_n$ and $R_4$ are Koszul. Further study of these algebras would be interesting.

\bibliographystyle{acm}
\bibliography{bibliog2}

\end{document}